\theoremstyle{plain}
\newtheorem{theorem}{Theorem}[section]
\newtheorem{lemma}[theorem]{Lemma}
\newtheorem{proposition}[theorem]{Proposition}
\newtheorem{corollary}[theorem]{Corollary}
\newtheorem{conjecture}[theorem]{Conjecture}
\theoremstyle{definition}
\theoremstyle{remark}
\renewcommand\footnotemark{}
\DeclareMathOperator{\Irr}{Irr}
\DeclareMathOperator{\tr}{tr}
\DeclareMathOperator{\GL}{GL}
\DeclareMathOperator{\GU}{GU}
\DeclareMathOperator{\G}{\bold{G}}
\DeclareMathOperator{\K}{\bold{K}}
\DeclareMathOperator{\M}{M}
\DeclareMathOperator{\cd}{cd}
\DeclareMathOperator{\A}{A}
\DeclareMathOperator{\Tr}{Tr}
\DeclareMathOperator{\Ind}{Ind}
\DeclareMathOperator{\T}{T}
\DeclareMathOperator{\g}{\mathfrak{g}}
\DeclareMathOperator{\I}{I}
\DeclareMathOperator{\sym}{sym}
\DeclareMathOperator{\diag}{diag}
\begin{document}

\title{Enumerating representations of general unitary groups over principal ideal rings of length $2$}
\author{Matthew Levy
\\\\Bielefeld University}
\date{}
\maketitle

\thispagestyle{empty}

\begin{abstract}
We enumerate the number of complex irreducible representations of each degree of general unitary groups of degree $4$ over principal ideal local rings of length two.

\end{abstract}


\section{Introduction}

Let $F$ be a non-Archimedean local field with ring of integers $\mathfrak{o}$ and let $\mathfrak{p}$ be the unique maximal ideal of $\mathfrak{o}$. Assume that the residue field $\bold{k}=\mathfrak{o}/\mathfrak{p}$ is finite of order $q$ and characteristic $p$. This paper concerns groups of the form $\G=\G(\mathfrak{o})$ where $\G$ is one of the $\mathfrak{o}$-group schemes of type $\A_{3}$, i.e.\ $\GL_4$ or $\GU_4$. Here, the groups $\GU_n(\mathfrak{o})$ are defined over $\mathfrak{o}$ using the non-trivial Galois automorphism of an unramified quadratic extension of $\mathfrak{o}$. For $l\in\mathbb{N}$ we denote by $\mathfrak{o}_l$ the reduction of $\mathfrak{o}_l$ modulo $\mathfrak{p}^l$, i.e.\ $\mathfrak{o}_l=\mathfrak{o}/\mathfrak{p}^l$. We will simply write $\G^{\epsilon}_n$ where $\epsilon\in\{\pm 1\}$ to denote $\G^{1}_n = \GL_n$ and $\G^{-1}_n=\GU_n$. We define the representation zeta function of a group $G$ to be the sum
$$
\zeta_G(s):=\sum_{\chi\in\Irr (G)}\chi(1)^{-s},
$$
where we sum over the complex irreducible characters of $G$, $\Irr (G)$ and $s$ is a complex variable.

The groups $\G^{\epsilon}_n(\mathfrak{o})$ play an important role in the representation theory of the groups $\G^{\epsilon}_n(F)$, being maximal compact subgroups. Furthermore, every continuous representation of $\G^{\epsilon}_n(\mathfrak{o})$ factors through one of the natural homomorphisms $\G^{\epsilon}_n(\mathfrak{o})\rightarrow\G^{\epsilon}_n(\mathfrak{o}_l)$. This brings the study of representations of the groups $\G^{\epsilon}_n(\mathfrak{o}_l)$ to the forefront. The study of representations of groups of type $\A_{n-1}$ has attracted much attention. In 1955 Green \cite{Green} described the characters of the complex irreducible representations of general linear groups over finite fields, i.e.\ groups of the form $\GL_n(\mathbb{F}_q)$. In the 1960s Ennola \cite{Ennola1, Ennola2} gave a description of the characters of the general unitary groups over finite fields and made a curious observation regarding the relationship between characters of general linear and general unitary groups. This `Ennola Duality' is discussed in more detail in Section \ref{ennola}. Recently, Avni, Onn, Klopsch \& Voll \cite{AKOV3} have developed explicit formulae for the representation zeta functions of the groups $\G^{\epsilon}_3(\mathfrak{o}_l)$. Singla \cite{Pooja} has described the representation zeta function of the groups $\GL_4(\mathfrak{o}_2)$, general linear groups over principal ideal rings of length two. In Theorem \ref{main}, the main result of this paper, we give a uniform description of the representation zeta function of the groups $\G^{\epsilon}_4(\mathfrak{o}_2)$ for $\epsilon\in\{\pm 1\}$. We impose no restriction on the residue characteristic of $\mathfrak{o}$. 

\subsection{Ennola Duality}\label{ennola}

In the 1960s Ennola (see \cite{Ennola1, Ennola2}) observed a duality between the character tables of the groups $\GL_n(\mathbb{F}_q)$ and $\GU_n(\mathbb{F}_q)$. In particular, he noted that there exists a finite index set $I=I(n)$ and polynomials $g_i\in\mathbb{Z}[t]$, $i\in I$ such that
$$
\cd(\GL_n(\mathbb{F}_q)) = \{g_i(q):i\in I\}\mbox{ and }\cd(\GU_n(\mathbb{F}_q)) = \{(-1)^{\deg(g_i)}g_i(-q):i\in I\}.
$$
where $\cd(G)=\{\chi(1)\,:\,\chi\in\Irr(G)\}$ denotes the set of character degrees of a group $G$. This phenomenon, known as `Ennola Duality', was later explained by Kawanaka \cite{Kawanaka}. In \cite[Theorem H]{AKOV3}, Avni, Onn, Klopsch \& Voll have observed an anologous form of Ennoloa duality for the groups $\GL_3(\mathfrak{o}_l)$ and $\GU_3(\mathfrak{o}_l)$. In particular, they observed that, for all $g(t)\in\mathbb{Z}[t]$ and $l\in\mathbb{N}$,
$$
g(q)\in\cd(\GL_3(\mathfrak{o}_l))\mbox{ if and only if }(-1)^{\deg g}g(-q)\in\cd(\GU_3(\mathfrak{o}_l)).
$$
For any prime $p$, we write $\cd(G)_{p'}=\{\chi(1)_{p'}\,:\,\Irr(G)\}$ for the prime-to-$p$ parts of the irreducible character degrees of a group $G$. From Theorem \ref{main} we deduce that an analogue of Ennola duality also holds for the groups $\G^{\epsilon}_4(\mathfrak{o}_2)$. More specifically, we observe that the prime-to-$p$ parts of the character degrees satisfy Ennola Duality:

\begin{corollary}\label{corp'}
There are $20$ character degrees ($p'$-part) of the groups $\G^{\epsilon}_4(\mathfrak{o}_2)$:\newline

%
%
%
%
%

\noindent$\cd(\G^{\epsilon}_4(\mathfrak{o}_2))_{p'}= \cd(\G^{\epsilon}_4(\mathfrak{o}_1))_{p'}\,\cup\,$
$\{ $
		$(q+\epsilon)(q^2+1)$,
		$(q^3-\epsilon)(q^4-1)$,
		$(q+\epsilon)(q^3-\epsilon)(q^4-1)$,\\

		\hspace{2cm}$(q-\epsilon)(q^3-\epsilon)(q^4-1)$,
		$(q^2-1)(q^3-\epsilon)(q^4-1)$,
		$(q+\epsilon)(q^2+\epsilon q + 1)(q^4-1)$,\\

		\hspace{2cm}$(q^3+\epsilon q^2+q+\epsilon)^2\}$\\
where\\

\noindent$\cd(\G^{\epsilon}_4(\mathfrak{o}_1))_{p'}= $
$\{ 1,$
		$q^2+1$,
		$q^2+\epsilon q+1$,
		$(q+\epsilon)^2(q^2+1)$,
		$(q+\epsilon)^2(q^2+1)(q^2+\epsilon q+1)$,
		$(q^2+\epsilon q+1)(q^4-1)$,\\

		\hspace{2cm}$(q^2-1)(q^4-1)$,
		$(q^2+1)(q^2+\epsilon q+1)$,
		$(q+\epsilon)(q^3+\epsilon)$,
		$(q^2+1)(q^3-\epsilon)$,\\

		\hspace{2cm}$(q-\epsilon)(q^2+1)(q^3-\epsilon)$,
		$(q-\epsilon)(q^2-1)(q^3-\epsilon),$
		$(q-\epsilon)(q^3-\epsilon)\}$.\\





\noindent Moreover, for $l=1,2$,
$$
g(q)\in\cd(\GL_4(\mathfrak{o}_l))_{p'}\iff (-1)^{\deg(g)}g(-q)\in\cd(\GU_4(\mathfrak{o}_l))_{p'}.
$$
\end{corollary}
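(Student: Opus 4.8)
The proof amounts to reading off and organising the data in Theorem~\ref{main}. Recall that Theorem~\ref{main} expresses $\zeta_{\G^{\epsilon}_4(\mathfrak{o}_2)}(s)$ as a finite sum $\sum_j c_j(q,\epsilon)\,D_j(q,\epsilon)^{-s}$, valid uniformly for $\epsilon\in\{\pm1\}$, where the $D_j$ are the character degrees and $c_j$ counts the characters of degree $D_j$. First I would extract the multiset $\{D_j(q,\epsilon)\}$ and write each $D_j$ as $q^{a_j}\,m_j(q,\epsilon)$ with $m_j$ a product of cyclotomic-type polynomials coprime to $q$. Since $q$ is a power of $p$ and every factor occurring in an $m_j$ (such as $q\pm\epsilon$, $q^2+1$, $q^2+\epsilon q+1$, $q^3-\epsilon$, $q^4-1$) reduces modulo $p$ to a nonzero constant, $m_j(q,\epsilon)$ is coprime to $p$; hence $D_j(q,\epsilon)_{p'}=m_j(q,\epsilon)$. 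Collecting the $m_j$ and discarding duplicates then produces $\cd(\G^{\epsilon}_4(\mathfrak{o}_2))_{p'}$, and one checks that exactly $20$ distinct polynomials survive.

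To obtain the decomposition into $13+7$, I would first determine $\cd(\G^{\epsilon}_4(\mathfrak{o}_1))_{p'}$ from Green's description of $\Irr(\GL_4(\mathbb{F}_q))$ and Ennola's of $\Irr(\GU_4(\mathbb{F}_q))$: list all degree polynomials, pass to $p'$-parts, and deduplicate to recover the displayed $13$. The reduction map $\G^{\epsilon}_4(\mathfrak{o}_2)\twoheadrightarrow\G^{\epsilon}_4(\mathfrak{o}_1)$ realises every level-one character as an inflation, so $\cd(\G^{\epsilon}_4(\mathfrak{o}_1))_{p'}\subseteq\cd(\G^{\epsilon}_4(\mathfrak{o}_2))_{p'}$; it then remains to confirm that the remaining $7$ polynomials $m_j$ are exactly those listed in the statement and are distinct from the $13$. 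These are finite comparisons, made easier by factorisations such as $q^3+\epsilon q^2+q+\epsilon=(q+\epsilon)(q^2+1)$, which one needs in order to recognise when two $m_j$ written in different forms coincide.

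For the Ennola duality, the case $l=1$ is classical — it is Ennola's original observation, explained by Kawanaka — so only $l=2$ requires argument, and there the uniformity of Theorem~\ref{main} in $\epsilon$ does everything. It suffices to check that every degree satisfies, as an identity of polynomials in $q$, $(-1)^{\deg D_j}D_j(-q,1)=D_j(q,-1)$; writing $D_j=q^{a_j}m_j$ this reduces to $(-1)^{\deg m_j}m_j(-q,1)=m_j(q,-1)$, the same identity for the $p'$-part. That in turn follows by multiplicativity of degree and of the substitution $q\mapsto-q$, once one observes that each factor $h$ occurring in an $m_j$ — an element of the finite set $\{q\pm\epsilon,\ q^2+1,\ q^2-1,\ q^2+\epsilon q+1,\ q^3-\epsilon,\ q^4-1\}$ — visibly satisfies $(-1)^{\deg h}h(-q,1)=h(q,-1)$. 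Since Theorem~\ref{main} is uniform in $\epsilon$, $m_j(q,-1)$ is itself a $p'$-degree of $\GU_4(\mathfrak{o}_2)$; letting $D_j$ run over all terms (and using the inclusion from level one) gives $g(q)\in\cd(\GL_4(\mathfrak{o}_2))_{p'}\Leftrightarrow(-1)^{\deg g}g(-q)\in\cd(\GU_4(\mathfrak{o}_2))_{p'}$.

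The conceptual input is small; the genuine work, and the step where an error is likeliest, is the bookkeeping of the first two paragraphs — enumerating every term of Theorem~\ref{main} without omission, factoring degrees that may appear expanded, spotting all coincidences among the $p'$-parts over the many terms, and verifying that the count is precisely $20$ with the split $13+7$. By contrast the factorwise Ennola symmetry is immediate once the factorisations are available.
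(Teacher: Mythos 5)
Your proposal is correct and follows essentially the same route as the paper, which likewise treats the corollary as pure bookkeeping: expand Theorem~\ref{main} (i.e.\ the data of Tables~\ref{table41} and \ref{table42} together with the centraliser zeta functions) into degrees $q^{a}m(q,\epsilon)$, note each $m$ is prime to $p$, deduplicate to get the $13+7$ split, and deduce the $l=2$ Ennola duality from the uniformity in $\epsilon$ via the factorwise symmetry $(-1)^{\deg h}h(-q,1)=h(q,-1)$. The only quibble is immaterial: your list of possible factors should also include $q^{3}+\epsilon$ (from $(q+\epsilon)(q^{3}+\epsilon)$ at level one), which satisfies the same symmetry.
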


In \cite[Theorem H]{AKOV3} the authors note that for $n=3$ there is a case distinction between $l=1$ and $l\geq 2$ for the prime-to-$p$ parts of the set of character degrees, $\cd(\G^{\epsilon}_3(\mathfrak{o}_l))_{p'}$. Corollary \ref{corp'} shows that there is also a case distinction for the prime-to-$p$ parts of set of character degrees between $l=1$ and $l=2$ for $n=4$ but we are unable to say anything for $l\geq 3$. We make the following more general conjecture:

\begin{conjecture}[Ennola Duality over $\mathfrak{o}_l$]
For all $l\in\mathbb{N}$
$$
g(q)\in\cd(\GL_n(\mathfrak{o}_l))_{p'}\iff (-1)^{\deg(g)}g(-q)\in\cd(\GU_n(\mathfrak{o}_l))_{p'}.
$$
\end{conjecture}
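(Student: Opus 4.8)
The plan is to attack the conjecture by realising both families as fixed points of a single reductive group scheme and then propagating the known residue-field ($l=1$) Ennola duality up the congruence filtration. Concretely, let $\mathbf{G}=\GL_n$ over the unramified quadratic extension $\mathfrak{o}'$ of $\mathfrak{o}$, and let $\sigma$ be the involution combining the non-trivial Galois automorphism of $\mathfrak{o}'/\mathfrak{o}$ with transpose-inverse, so that $\G^{-1}_n(\mathfrak{o}_l)=\mathbf{G}(\mathfrak{o}'_l)^{\sigma}$ while $\G^{1}_n(\mathfrak{o}_l)=\GL_n(\mathfrak{o}_l)$. The advantage of this uniform presentation is that the two forms differ only by the Frobenius twist that, at the level of the combinatorial invariants below, is implemented by the formal substitution $q\mapsto -q$. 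The base case $l=1$ is already available: for $\G^{\epsilon}_n(\bold{k})$ the duality is exactly the classical Ennola phenomenon explained by Kawanaka \cite{Kawanaka}, where the degree polynomials for the split and quasi-split forms are interchanged by $q\mapsto -q$ together with the sign $(-1)^{\deg}$.

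The inductive engine would be the filtration $\G^{\epsilon}_n(\mathfrak{o}_l)\to\G^{\epsilon}_n(\mathfrak{o}_{l-1})$, whose kernel $K_{l-1}$ is abelian and isomorphic, via the trace form, to (a twisted form of) the Lie algebra $\g^{\epsilon}_n(\bold{k})$. Clifford theory over this kernel parametrises $\Irr(\G^{\epsilon}_n(\mathfrak{o}_l))$ by pairs consisting of an orbit of characters of $K_{l-1}$ under the (co)adjoint action of $\G^{\epsilon}_n(\mathfrak{o}_{l-1})$ and an irreducible representation of the relevant inertia group. The character degrees then factor as an orbit size multiplied by a degree of a representation of the stabiliser, and the stabilisers are themselves centraliser-type subgroups of the form $C_{\mathbf{G}}(x)(\mathfrak{o}_{l'})$ for smaller $l'$. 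The strategy is to run this decomposition in parallel for $\epsilon=\pm 1$ and to show, by induction on $l$ and on $n$, that each ingredient is ``generic'': given by a fixed element of $\mathbb{Z}[t]$ evaluated at $q$, with the unitary case obtained by $t\mapsto -t$.

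The reason the duality should survive this induction is that both factors are individually Ennola-dual. On the one hand, the split and quasi-split adjoint orbits on $\g^{\epsilon}_n(\bold{k})$ are in natural bijection, with orbit sizes and centraliser orders related by $q\mapsto -q$, since these are computed from the same Weyl-group combinatorics of Jordan types; on the other hand, the centralisers $C_{\mathbf{G}}(x)$ of matching orbit representatives are $\epsilon$-forms of the same reductive-times-unipotent group, so their representation theory is dual by the inductive hypothesis. This is also precisely where restricting to $\cd(\cdot)_{p'}$ becomes essential: every degree splits as a $p$-part (a power of $q$ coming from the nilpotent/unipotent direction) times a $p'$-part built from cyclotomic factors $\Phi_d(q)$ and factors of the shape $q^{a}-\epsilon^{a}$. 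The $p'$-part is governed by roots of unity and therefore transforms transparently under $q\mapsto -q$, whereas the bare power-of-$q$ part does not obey the clean sign rule; stripping it is exactly what makes the statement uniform.

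The main obstacle I expect is the control of the non-regular strata. For $x$ not regular semisimple the centraliser $C_{\mathbf{G}}(x)$ is non-reductive, and the inner representations attached to such stabilisers are not determined by a naive reductive-plus-Lie-algebra recipe, especially in small residue characteristic where the Kirillov orbit method degrades and the relevant representations become genuinely wild. Establishing, uniformly in $\epsilon$, that the $p'$-parts of these degrees are nevertheless given by $q\mapsto -q$-compatible polynomials is the analytic heart of the problem; it is exactly the growth in the number of centraliser types and wild pieces that confines \cite{AKOV3} to $n=3$ and the present paper to $n=4$, $l\le 2$. A realistic intermediate target would therefore be to prove the conjecture for all $l$ under a tameness hypothesis (say $p>n$ or $l$ even), where the orbit method applies cleanly and the induction closes, and to treat the wild remainder separately by a direct matching of the finitely many exceptional centraliser types.
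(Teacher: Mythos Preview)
The statement you are addressing is explicitly labelled a \emph{Conjecture} in the paper; there is no proof to compare against. The paper offers the conjecture on the strength of the $n=3$ case established in \cite{AKOV3} and the $n=4$, $l\le 2$ case computed here, and leaves it open otherwise. So your write-up is not a proof but a programme, and you are honest about this: you yourself identify the non-regular centraliser types and the wild (small residue characteristic) behaviour as the genuine obstruction, and you downgrade the target to a tameness-restricted version at the end.

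As a strategy your outline is sensible and is exactly the mechanism the paper uses to generate evidence: run Clifford theory along the congruence filtration, reduce to centralisers in $\g^{\epsilon}_n(\bold{k})$, and match the $\epsilon=1$ and $\epsilon=-1$ data type by type. But the inductive step as you state it does not close. The stabilisers arising are not in general of the form $C_{\mathbf{G}}(x)(\mathfrak{o}_{l'})$ for smaller $l'$; already for $l=2$ and $n=3,4$ the paper has to introduce the ad hoc groups $\G^{\epsilon}_{(l,1)}$ and $\G^{\epsilon}_{(2,1,1)}$ and compute their representations by hand, and these are not covered by an induction on $(n,l)$ over the groups $\G^{\epsilon}_m(\mathfrak{o}_k)$ alone. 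Moreover, for $\epsilon=-1$ the zeta function of $\G^{\epsilon}_{(2,1,1)}$ computed in the paper is visibly \emph{not} obtained from the $\epsilon=1$ answer by $q\mapsto -q$ term by term (compare the two displayed formulae for $\zeta_{\K^{\epsilon}}(s)$), so the claim that ``each ingredient is generic'' fails at the level of full degrees; it is only after taking $p'$-parts and reassembling that the duality is recovered, and you have not supplied any mechanism that explains why the non-dual pieces cancel. Until one has a structural reason for that cancellation --- rather than a case-by-case verification --- the argument remains a heuristic, which is precisely why the paper records the statement as a conjecture.
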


\subsection{Symmetric matrices and the Frobenius-Schur indicator}

In \cite[Remark 1.3]{AKOV3} the authors note that the special value of the zeta function $\zeta_{\GU_3(\mathfrak{o}_l)}(s)$ at $s = -1$ (i.e.\ the sum of character degrees) is equal to the number of symmetric matrices in $\GU_3(\mathfrak{o}_l)$, that is
$$
\zeta_{\GU_3(\mathfrak{o}_l)}(-1)  = (1+q^{-1})(1+q^{-3})q^{6l} = \mbox{number of symmetric matrices in $\GU_3(\mathfrak{o}_l)$}.
$$
The corresponding assertion for $\GL_3(\mathfrak{o}_l)$ holds only for $l=1$. This observation, that the sum of character degrees is equal to the number of symmetric matrices, was observed by Gow \& Klyachko for the groups $\GL_n(\mathbb{F}_q)$ and Thiem \& Vinroot for the groups $\GU_n(\mathbb{F}_q)$ (see \cite{ThiemVinroot}). This phenomenon fails in the cases $\G^{\epsilon}_4(\mathfrak{o}_2)$. Let $\sym^{\epsilon}_n(\mathfrak{o}_l)$ denote the number of symmetric matrices in $\G^{\epsilon}_n(\mathfrak{o}_l)$. For $n=4$ we have
$$
\sym^{\epsilon}_4(\mathfrak{o}_l)=(1-\epsilon q^{-1})(1-\epsilon q^{-3})q^{20}.
$$ 
We have the following corollary of Theorem \ref{main}
\begin{corollary}\label{cormain}
The representation zeta functions of the groups $\G^{\epsilon}_4(\mathfrak{o}_2)$ evaluated at $s=-1$ are given by
\begin{eqnarray*}
\zeta_{\GU_4(\mathfrak{o}_2)}(-1) &=& q^2(q^2-q+1)(q^{14}+q^7-2q^6-q^5+2q^4-q^3+2q^2+q-2)(q+1)^2;\\
\zeta_{\GL_4(\mathfrak{o}_2)}(-1) &=& q(q^2+q+1)(q^{15}+2q^{10}-2q^8+2q^6-2q^4-4q^2+4)(q-1)^2
\end{eqnarray*}
where $q$ is the cardinality of $\mathfrak{o}_1$. Moreover,
\begin{eqnarray*}
\zeta_{\GU_4(\mathfrak{o}_2)}(-1)-\sym^{-1}_4(\mathfrak{o}_2) &=& q^2(q-2)(q^2+1)(q^2-q+1)(q-1)^2(q+1)^4;\\
\zeta_{\GL_4(\mathfrak{o}_2)}(-1)-\sym^{1}_4(\mathfrak{o}_2) &=& 2q(q^2+q+1)(q^4+2)(q^2+1)(q+1)^2(q-1)^4;\\
\frac{\zeta_{\GU_4(\mathfrak{o}_2)}(-1)}{\sym^{-1}_4(\mathfrak{o}_2)} &= &\frac{q^{14}+q^7-2q^6-q^5+2q^4-q^3+2q^2+q-2}{q^{14}};\\
\frac{\zeta_{\GL_4(\mathfrak{o}_2)}(-1)}{\sym^{1}_4(\mathfrak{o}_2)}&=&\frac{q^{15}+2q^{10}-2q^8+2q^6-2q^4-4q^2+4}{q^{15}}.
\end{eqnarray*}
In particular,
$$
\lim_{q\rightarrow\infty}\frac{\G^{\epsilon}_4(\mathfrak{o}_2)}{\sym^{\epsilon}_4(\mathfrak{o}_2)}=1.
$$
\end{corollary}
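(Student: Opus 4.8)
The plan is to derive Corollary~\ref{cormain} directly from the explicit formula for $\zeta_{\G^{\epsilon}_4(\mathfrak{o}_2)}(s)$ furnished by Theorem~\ref{main}. First I would substitute $s=-1$ into that formula; by the statement of Theorem~\ref{main} the zeta function is a finite sum of terms of the shape $m_i\cdot D_i(q)^{-s}$ where the $D_i(q)\in\mathbb{Z}[q]$ are the character degrees and the $m_i$ are the corresponding multiplicities (themselves polynomials in $q$, or Laurent polynomials once one clears the powers of $q$ coming from the order of the group). Setting $s=-1$ turns each term into $m_i\cdot D_i(q)$, so $\zeta_{\G^{\epsilon}_4(\mathfrak{o}_2)}(-1)=\sum_i m_i(q)D_i(q)$, a single explicit polynomial in $q$ (and $\epsilon$). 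The first task is therefore purely a matter of collecting these contributions and simplifying; one should obtain the two displayed closed forms, and a useful internal check is that both sides must be divisible by $|\G^{\epsilon}_4(\mathbf{k})|$ appropriately and must specialise correctly at small $q$, and that the $\GL$ and $\GU$ answers are exchanged (up to the expected sign bookkeeping) under $q\mapsto -q$, consistent with the Ennola phenomenon discussed in Section~\ref{ennola}.

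Next I would treat the comparison with symmetric matrices. Here one uses the stated count $\sym^{\epsilon}_4(\mathfrak{o}_l)=(1-\epsilon q^{-1})(1-\epsilon q^{-3})q^{20}$, specialised to $l=2$: $\sym^{\epsilon}_4(\mathfrak{o}_2)=(1-\epsilon q^{-1})(1-\epsilon q^{-3})q^{20}=q^{16}(q-\epsilon)(q^3-\epsilon)$. One then forms the difference $\zeta_{\G^{\epsilon}_4(\mathfrak{o}_2)}(-1)-\sym^{\epsilon}_4(\mathfrak{o}_2)$ and factors the resulting polynomial; the target factorisations in the corollary, e.g.\ $q^2(q-2)(q^2+1)(q^2-q+1)(q-1)^2(q+1)^4$ in the unitary case, are then verified by expanding and matching coefficients (or by checking agreement of two degree-$20$ polynomials at $21$ values of $q$). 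The ratios $\zeta/\sym$ follow immediately by dividing the closed form for $\zeta(-1)$ by $q^{16}(q-\epsilon)(q^3-\epsilon)$ and observing that this product equals $q^{14}$ in the $\GU$ normalisation and $q^{15}$ in the $\GL$ normalisation after the cancellation displayed — again a direct polynomial division. Finally, the limit statement $\lim_{q\to\infty}\zeta_{\G^{\epsilon}_4(\mathfrak{o}_2)}(-1)/\sym^{\epsilon}_4(\mathfrak{o}_2)=1$ is read off from the ratio formulas, since the numerators are monic of the same degree as the denominators ($q^{14}$ resp.\ $q^{15}$) with all lower-order terms contributing $O(q^{-1})$.

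The only genuine obstacle is bookkeeping: Theorem~\ref{main} lists on the order of $20$ distinct degrees, each with a multiplicity that is itself a polynomial in $q$ of moderate degree, and one must sum $\sum_i m_i(q)D_i(q)$ without error, keeping the $\epsilon$-dependence uniform across the $\GL$ and $\GU$ cases. I would organise this by grouping the degrees according to the strata of the orbit/Clifford-theory decomposition underlying Theorem~\ref{main} (primitive versus imprimitive characters, characters lifted from $\G^{\epsilon}_4(\mathfrak{o}_1)$, etc.), evaluate each stratum's contribution to $\zeta(-1)$ separately, and then add. A convenient sanity check at the end is that $\zeta_{\G^{\epsilon}_4(\mathfrak{o}_2)}(-1)$ must reduce modulo the appropriate power of $q$ to $\zeta_{\G^{\epsilon}_4(\mathfrak{o}_1)}(-1)$ times $q^{16}$ (the order of the kernel of $\G^{\epsilon}_4(\mathfrak{o}_2)\to\G^{\epsilon}_4(\mathfrak{o}_1)$), which pins down the top-degree behaviour independently of the detailed multiplicities. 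Once the two closed forms are in hand, all remaining assertions of Corollary~\ref{cormain} are one-line polynomial identities.
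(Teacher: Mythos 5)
Your proposal is correct and is essentially the paper's own (implicit) argument: the corollary follows by setting $s=-1$ in the formula of Theorem~\ref{main}, i.e.\ summing $n_A\,\zeta_{Z}( -1)\,|\G^{\epsilon}_4(\mathfrak{o}_1):Z|$ over the types in Table~\ref{table42}, and then comparing the resulting polynomial with $\sym^{\epsilon}_4(\mathfrak{o}_2)=q^{16}(q-\epsilon)(q^3-\epsilon)$ by direct factorisation and division. One caveat: your final ``sanity check'' that $\zeta_{\G^{\epsilon}_4(\mathfrak{o}_2)}(-1)$ should match $q^{16}\cdot\zeta_{\G^{\epsilon}_4(\mathfrak{o}_1)}(-1)$ in top degree is false (the former has leading term $q^{20}$, the latter $q^{26}$); the characters factoring through $\G^{\epsilon}_4(\mathfrak{o}_1)$ contribute only $\zeta_{\G^{\epsilon}_4(\mathfrak{o}_1)}(-1)$ itself, so you should drop or repair that check, though it does not affect the validity of the main computation.
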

The reason for the failure of the sum of character degrees to equal the number of symmetric matrices can be expressed in terms of word maps with automorphisms and generalized Frobenius-Schur indicators. It is known, see \cite{BumpGinzburg}, that for a finite group $G$ with automorphism $\tau$ of order $2$ we have, for each $g\in G$:
\begin{eqnarray}\label{fs}
\sum_{\chi\in\Irr(G)}\mathfrak{i}_{\tau}(\chi)\chi(g) = |\{h\in G:h^{\tau}h = g\}|,
\end{eqnarray}
where $\mathfrak{i}_{\tau}(\chi) = \frac{1}{|G|}\sum_{g\in G}\chi(g^{\tau}g)$ (analogous to the classic Frobenius-Schur indicator when $\tau$ is trivial). We can think of the right hand side of (\ref{fs}) as the number of solutions to the word map given by $h^{\tau}h = g$ where we solve for $h\in G$ for a given element $g\in G$. When $G = \G^{\epsilon}_n(\mathfrak{o}_l)$, $g=1$ and $\tau$ is the transpose-inverse automorphism equation (\ref{fs}) becomes
$$
\sum_{\chi\in\Irr(G)}\mathfrak{i}_{\tau}(\chi)\chi(1) = |\{h\in G:h\mbox{ is symmetric}\}|,
$$
a weighted sum of character degrees. We conclude from Corollary \ref{cormain} that, for $\G^{\epsilon}_4(\mathfrak{o}_2)$, some representations have non-trivial generalized Frobenius-Schur indicator. This contrasts with the field case where all groups $\G^{\epsilon}_n(\mathbb{F}_q)$ have generalised Frobenius-Schur indicators equal to $1$. It would be interesting to see to what extent the final statement of Corollary \ref{cormain} holds for the groups $\G^{\epsilon}_n(\mathfrak{o}_l)$ with $n\geq 4$ and $l\geq 2$.

\section{The representation zeta function of $\G^{\epsilon}_n(\mathfrak{o}_l)$}

Let $F$ be a non-Archimedean local field with ring of integers $\mathfrak{o}$ and let $\mathfrak{p}$ be the unique maximal ideal of $\mathfrak{o}$. Assume that the residue field $\bold{k}=\mathfrak{o}/\mathfrak{p}$ is finite of order $q$ and characteristic $p$. We also fix a uniformiser $\pi$ of $\mathfrak{o}$. A typical example of such a field $F$ is $\mathbb{Q}_p$ (the $p$-adic numbers) with ring of integers $\mathbb{Z}_p$ (the $p$-adic integers), unique maximal ideal $p\mathbb{Z}_p$ and residue field $\mathbb{F}_p$. Let $\mathfrak{O}$ be an unramified quadratic extension of $\mathfrak{o}$, with valuation ideal $\mathfrak{P}$ and residue field $\bold{k}_2$, a quadratic extension of $\bold{k}$. Then $\mathfrak{O}=\mathfrak{o}[\delta]$, where $\delta=\sqrt{\rho}$ for an element $\rho\in\mathfrak{o}$ whose reduction modulo $\mathfrak{p}$ is a non-square in $\bold{k}$, and $\mathfrak{P}=\pi\mathfrak{O}$. Let $\mathfrak{I}$ denote the integral closure of $\mathfrak{O}$ in some fixed algebraic closure of its fraction field, and choose an $\mathfrak{o}$-automorphism $\circ$ of $\mathfrak{I}$ restricting to the non-trivial Galois automorphism of the quadratic extension $\mathfrak{O} | \mathfrak{o}$.  Let $n\in\mathbb{N}$. For a matrix $A = (a_{ij})\in\M_n(\mathfrak{O})$ write $A^{\circ} = ((a_{ij}^{\circ})^{\tr})$, for the conjugate transpose. A matrix is \textit{hermitian} if $A^\circ=A$ and \textit{anti-hermitian} if $A^\circ = -A$. The \textit{standard unitary group} over $\mathfrak{o}$ is the group
$$
\GU_n(\mathfrak{o}) = \{A\in\GL_n(\mathfrak{O}):A^\circ A = \I_n\}.
$$
We also define the corresponding \textit{standard unitary $\mathfrak{o}$-Lie lattice} to be
$$
\mathfrak{gu}_n(\mathfrak{o}) = \{A\in\mathfrak{gl}_n(\mathfrak{O}):A^\circ+A=0\}.
$$
For $l\in\mathbb{N}$ we denote by $\mathfrak{o}_l$ the reduction of $\mathfrak{o}_l$ modulo $\mathfrak{p}^l$, i.e.\ $\mathfrak{o}_l=\mathfrak{o}/\mathfrak{p}^l$, and analogously $\mathfrak{O}_l=\mathfrak{D}/\mathfrak{P}^l$. A matrix $A\in\GU_n(\mathfrak{o}_l)$ is called \textit{hermitian}, respectively \textit{anti-hermitian}, if it is the image of a hermitian, respectively anti-hermitian matrix, modulo $\mathfrak{P}^l$. Recall that we will simply write $\G^{\epsilon}_n$ where $\epsilon\in\{\pm 1\}$ to denote $\G^{1}_n = \GL_n$ and $\G^{-1}_n=\GU_n$.

Our overall aim, reached in Theorem \ref{main}, is to compute a uniform formula for the representation zeta function of the groups of the form $\G^{\epsilon}_4(\mathfrak{o}_2)$, $\epsilon\in\{\pm 1\}$. First we consider general $n\in\mathbb{N}$, specialising to $n=4$ later on. Now we describe a bijection between the irreducible representations of the groups $\G^{\epsilon}_n(\mathfrak{o}_2)$ and the union of the irreducible representations of centralisers of certain matrices over the field $\mathfrak{o}_1$. Details can be found in \cite{Pooja2}. Let $\K^{\epsilon}_n$ denote the kernel of the map $\kappa:\G^{\epsilon}_n(\mathfrak{o}_2)\rightarrow\G^{\epsilon}_n(\mathfrak{o}_1)$. Note that $\K^{\epsilon}_n$ is a finite abelian group and let $\widehat{\K^{\epsilon}_n}$ denote the set of characters of $\K^{\epsilon}_n$. The group $\G^{\epsilon}_n(\mathfrak{o}_2)$ acts on $\widehat{\K^{\epsilon}_n}$ by conjugation: if $g\in\G^{\epsilon}_n(\mathfrak{o}_2)$ and $\phi\in\widehat{\K^{\epsilon}_n}$ then $\phi^g(x) = \phi(x^g)$ for $x\in\K^{\epsilon}_n$. For any $\phi\in\widehat{\K^{\epsilon}_n}$, write $\T^{\epsilon}_n(\phi)=\{g\in\G^{\epsilon}_n(\mathfrak{o}_2):\phi^g=\phi\}$. Let $\mathfrak{C}^{\epsilon}_n$ denote the set of $\G^{\epsilon}_n(\mathfrak{o}_2)$-orbits in $\widehat{\K^{\epsilon}_n}$. It is shown in \cite{Pooja2} that for a character $\phi\in\widehat{\K^{\epsilon}_n}$ there exists a canonical extension $\chi_{\phi}$ to $T^{\epsilon}_n(\phi)$ so that $\chi_{\phi}|_{\K^{\epsilon}_n}=\phi$. By Clifford Theory and \cite{Pooja2} there exists a bijetion between the sets
$$
\amalg_{\phi\in\mathfrak{C}^{\epsilon}_n}\{\Irr(\T^{\epsilon}_n(\phi)/\K^{\epsilon}_n)\} \longleftrightarrow\Irr(\G^{\epsilon}_n(\mathfrak{o}_2))
$$
given by
$$
\delta\mapsto\Ind_{\T^{\epsilon}_n(\phi)}^{\G^{\epsilon}_n(\mathfrak{o}_2)}(\chi_{\phi}\otimes\delta).
$$
Fix a non-trivial additive character $\psi:\mathfrak{o}_1\rightarrow\mathbb{C}^*$. Define $\g^1_n(\mathfrak{o}_1):=\mathfrak{gl}_n(\mathfrak{o}_1)$. For each matrix $A\in$ $\g^1_n(\mathfrak{o}_1)$ define the character $\psi_A:\K_n^{1}\rightarrow\mathbb{C}^*$ by
$$
\psi_A(\I_n + \pi X) = \psi(\Tr(AX)).
$$
The assignment $A\mapsto\psi_A$ defines an isomorphism $\g^1_n(\mathfrak{o}_1)\cong\widehat{\K_n^{1}}$. Define $\g^{-1}_n(\mathfrak{o}_1)$ to be the subgroup of $\g^1_n(\mathfrak{D}_1)$ such that $X\mapsto I_n + \pi X$ defines an isomorphism $\g^{-1}_n(\mathfrak{o}_1)\cong\widehat{\K_n^{-1}}$. Thus $\g^{-1}_n(\mathfrak{o}_1) = \{X\in\g^1_n(\mathfrak{O}_1):X+X^\circ = 0\}=\mathfrak{gu}_n(\mathfrak{o}_1)$. Let $\mathfrak{S}^{\epsilon}_n$ denote the set of orbits of $\g^{\epsilon}_n(\mathfrak{o}_1)$ by $\G^{\epsilon}_n(\mathfrak{o}_2)$. Since $T^{\epsilon}_n(\phi)/\K^{\epsilon}_n\cong Z_{\G^{\epsilon}_n(\mathfrak{o}_1)}(A)$ we have the following bijection
\begin{eqnarray}\label{eqnsim}
\amalg_{A\in\mathfrak{S}^{\epsilon}_n}\{\Irr(Z_{\G^{\epsilon}_n(\mathfrak{o}_1)}(A))\} \longleftrightarrow\Irr(\G^{\epsilon}_n(\mathfrak{o}_2)).
\end{eqnarray}
It follows that to find the complex irreducible representations of the groups $\G^{\epsilon}_n(\mathfrak{o}_2)$ one can simply write down representatives for the similarity classes $\mathfrak{S}^{\epsilon}_n$ and induce representations of their centralisers. The case $\epsilon = 1$, $n=4$ has been described by Singla \cite{Pooja}. In Theorem \ref{main} we give formulae for the number of irreducible representations of each degree of the group $\GU_4(\mathfrak{o}_2)$. This is achieved by writing down representatives $A$ of similarity classes in $\g^{-1}_4(\mathfrak{o}_1)$ and studying the representations of their centralisers $Z_{\G^{\epsilon}_4(\mathfrak{o}_1)}(A)$. 

Let $f(t) = t^d-a_{d-1}t^{d-1}-...-a_0$ be a polynomial over a field $\mathbb{F}_q$ of degree $d$. Define matrices
$$
U(f) = U_1(f) = \begin{pmatrix} 0 & 1\\ 0 & 0 & 1\\ . & . & . & . & . \\ 0 & 0 & 0 & ... &1\\ a_0 & a_1 & a_2 & ... & a_{d-1}\end{pmatrix}
$$
and
$$
U_m(f) = \begin{pmatrix} U(f) & \I_d  \\  & U(f) & \I_d\\ . & . & . & . & \\  &  &  & U(f)\end{pmatrix}
$$
with $m$ diagonal blocks $U(f)$ and where $\I_d$ is the $d\times d$ identity matrix. For a partition $\lambda = \{l_1,l_2,...,l_p\}$ of a positive integer $k$ with $l_1\geq l_2\geq...\geq l_p>0$ write
$$
U_{\lambda}(f) = \begin{pmatrix} U_{l_1}(f) \\  & U_{l_1}(f) \\ . & . & . & . & \\  &  &  & U_{l_p}(f)\end{pmatrix}.
$$
In \cite{Green} Green shows that there is a one-to-one correspondence between similarity clases in $\g^{1}_n(\mathfrak{o}_1)$ and collections of irreducible polynomials with associated partitions satisfying certain conditions. 
More specifically, suppose that the characteristic polynomial of a similarity class $C$ is $f_1^{k_1}...f_N^{k_N}$ where the $f_i$ are distinct irreducible polynomials over $\mathfrak{o}_1$, $k_i\geq 0$ and if the respective degrees of the $f_i$ are $d_i$ then $\sum_{i=1}^{N}k_id_i=n$. Then $C$ is similar to the diagonal block matrix
$$
\diag\{U_{\nu_C(f_1)}(f_1), U_{\nu_C(f_2)}(f_2),...,U_{\nu_C(f_N)}(f_N)\}
$$
where each $\nu_C(f_i)$ is a certain partition of $k_i$ depending on $C$. We may therefore represent the similarity class $C$ by the symbol
$$
\{f_1^{\nu_C(f_1)},...,f_N^{\nu_C(f_N)}\}.
$$
Now let $C= \{...,f_i^{\nu_C(f_i)},...\}$ for some irreducible polynomials $f_i$. For  a natural number $d\geq 1$ and a partition $\nu$ other than $0$ write $r_C(d,\nu)$ for the number of irreducible polynomials of degree $d$ appearing in the characteristic polynomial of $C$ with partition $\nu_C(f)=\nu$.  Let $\rho_C(\nu)$ be the partition
$$
(n^{r_C(n,\nu)},(n-1)^{r_C(n-1,\nu)},...).
$$
We say that two similarity classes, $A$ and $B$, are of the same \textit{type} if and only if $\rho_A(\nu)=\rho_B(\nu)$ for every non-zero partition $\nu$. We will also say that two matrices are of the same type if their respective similarity classes are of the same type.
Now let $\rho_\nu$ be a partition valued function on the non-zero partitions $\nu$ (we allow $\rho_\nu$ to take the value zero). The function $\rho_\nu$ describes a type in $\g^{1}_n(\mathfrak{o}_1)$ if and only if
\begin{eqnarray}\label{eqntype}
\sum_{\nu}|\rho_\nu||\nu| = n.
\end{eqnarray}
We analogously define the \textit{type} of a similarity class in $\g^{-1}_n(\mathfrak{o}_1)$. The following lemmas, from \cite{AKOV3}, allow us to describe the types of matrices that occur in $\g^{\epsilon}_n(\mathfrak{o}_1)$:
\begin{lemma}[Lemma 3.2 \cite{AKOV3}]
Let $A,B\in\g^{-1}_n(\mathfrak{o})$ be similar, i.e.\ $\GL_n(\mathfrak{O})$-conjugate. Then $A,B$ are already $\GU_n(\mathfrak{o})$-conjugate.
\end{lemma}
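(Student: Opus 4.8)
The plan is a ``hermitian square root'' argument reducing $\GU_n(\mathfrak o)$-conjugacy to a norm equation inside a centraliser. Suppose $B=gAg^{-1}$ with $g\in\GL_n(\mathfrak O)$. Since $\circ$ is an anti-automorphism and $A,B\in\g^{-1}_n(\mathfrak o)$ satisfy $A^\circ=-A$, $B^\circ=-B$, applying $\circ$ to this equation yields $B=(g^\circ)^{-1}Ag^\circ$; comparing with $B=gAg^{-1}$ shows that $u:=g^\circ g$ commutes with $A$, and clearly $u^\circ=u$. So it suffices to produce $s\in Z_{\GL_n(\mathfrak O)}(A)$ with $s^\circ s=u$: then $h:=gs^{-1}$ satisfies $h^\circ h=(s^\circ)^{-1}us^{-1}=\I_n$, so $h\in\GU_n(\mathfrak o)$, while $hAh^{-1}=gAg^{-1}=B$ because $s$ centralises $A$. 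This is exactly the assertion of the lemma.

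The key structural point is that, precisely \emph{because $A$ is anti-hermitian}, $\circ$ restricts to an $\mathfrak o$-linear anti-involution of the centraliser ring $R:=Z_{\M_n(\mathfrak O)}(A)$: if $XA=AX$ then $A^\circ X^\circ=X^\circ A^\circ$, i.e.\ $AX^\circ=X^\circ A$. Thus $(R,\circ)$ is an $\mathfrak O$-order with involution, $u$ is a hermitian unit of $R$, and we must solve $u=s^\circ s$ in $R^\times$. I would attack this through the primary decomposition of $A$: $\circ$ permutes the primary blocks of $R$, pairing the block of an irreducible factor $f$ with that of its conjugate-negate $f^\ast(t)=(-1)^{\deg f}\overline{f(-t)}$. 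On a block genuinely swapped with a distinct partner one writes down a suitable $s$ directly. On a self-paired block ($f=f^\ast$) an elementary argument on roots shows the induced involution is necessarily of \emph{unitary} (second) kind --- so, crucially, the orthogonal case never arises --- and hence the block's norm-one group has connected reductive quotient, a product of general unitary groups.

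It then remains to solve $u=s^\circ s$. Modulo $\mathfrak P$ this is the statement that every non-degenerate hermitian form over the residue-field extension $\mathfrak O_1\,|\,\mathfrak o_1$ is equivalent to the standard one; as the relevant norm-one group is connected, this also follows from Lang's theorem, and the $\circ$-stable radical of $R/\mathfrak{P}R$ arising from the nilpotent part of $A$ is handled by the same input along its filtration. One lifts the residual solution to $\mathfrak O$ by successive approximation, using completeness of $\mathfrak o$ and that $s\mapsto s^\circ s$ is smooth at invertible points: its differential $\dot s\mapsto\dot s^\circ s+s^\circ\dot s$ surjects onto hermitian elements because the involution is of the second kind on every block, and this is exactly why the argument needs no hypothesis on the residue characteristic. (For the field case alone there is a shorter route: write $\GU_n(\mathfrak o_1)=\GL_n(\overline{\mathbb F}_q)^F$ for a Steinberg endomorphism $F$ with $\g^{-1}_n(\mathfrak o_1)=\mathfrak{gl}_n(\overline{\mathbb F}_q)^{dF}$; then the transporter $\{g:gAg^{-1}=B\}$ is $F$-stable and is a torsor under the connected group $Z_{\GL_n}(A)$, so Lang--Steinberg provides an $F$-fixed point.) I expect the main obstacle to be the structural bookkeeping of the second step --- organising the blocks of $(R,\circ)$ and the classification of the induced involutions cleanly enough that the lift is uniform, in particular in residue characteristic $2$, where it is the absence of orthogonal-type blocks that rescues the argument --- together with, over the full ring $\mathfrak o$, controlling the possible jump of $Z_{\M_n(\mathfrak O)}(A)$ modulo $\mathfrak p$.
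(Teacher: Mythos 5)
The paper does not actually prove this lemma: it is imported verbatim from \cite{AKOV3} (their Lemma 3.2), so there is no in-paper argument to compare yours against. Taken on its own terms, your reduction is correct and is the standard route: from $B=gAg^{-1}$ you correctly extract the hermitian unit $u=g^\circ g$ in $R=Z_{\M_n(\mathfrak{O})}(A)$, note that $\circ$ restricts to $R$ because $A$ is anti-hermitian, and reduce the lemma to solving $u=s^\circ s$ in $R^\times$; and your two main ingredients (all involutions arising on the stable simple blocks are of the second kind, so hermitian units over the residue field are norms by the uniqueness of non-degenerate hermitian forms over finite fields, equivalently Lang's theorem for the connected unitary groups; then a Hensel-type lift) are the right ones and do suffice.

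Two points deserve tightening. First, the ``primary decomposition of $A$'' is not literally available over $\mathfrak{O}$: the lattice $\mathfrak{O}^n$, and hence the order $R$, need not split into primary blocks, since distinct irreducible factors of the characteristic polynomial can become congruent modulo $\mathfrak{P}$ (e.g.\ $t^2-\pi^2$). This is not fatal, because your argument only ever needs the Wedderburn decomposition of $(R/\mathfrak{P}R)/\mathrm{rad}$, and the ``no orthogonal blocks'' claim there follows at once from semilinearity: $\circ$ acts as the non-trivial Frobenius on the central copy of $\mathbf{k}_2$ inside every $\circ$-stable simple factor, so the induced involution is automatically of the second kind --- no argument on roots, and no case analysis in the $f=f^\ast$ spirit, is needed. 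Second, the lifting (through the radical and $\mathfrak{P}$-adically) should be made explicit: since $\mathfrak{O}\,|\,\mathfrak{o}$ is unramified, there is $\mu\in\mathfrak{O}$ with $\mu+\mu^\circ=1$, and the correction $x=\mu v^{-1}\delta$ solves $x^\circ v+vx=\delta$ for hermitian $v$ invertible and hermitian error $\delta$, so successive approximation converges in the complete ring; this is the concrete content of your ``the differential surjects'' remark and is exactly what makes the argument uniform in residue characteristic, including $p=2$. Finally, the ``jump'' of the centraliser modulo $\mathfrak{p}$ that you flag as an obstacle is harmless here: $R$ is saturated in $\M_n(\mathfrak{O})$, and the whole argument is run in $R/\mathfrak{P}R$ (which may be smaller than $Z_{\M_n(\mathbf{k}_2)}(\bar{A})$), never in the centraliser of the reduction, so no control of that jump is required. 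Your parenthetical Lang--Steinberg argument for the residue-field case is also correct.
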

\begin{lemma}[Lemma 3.5 \cite{AKOV3}]
Let $A\in\g^{1}_n(\mathfrak{O}_l)$ with characteristic polynomial $f_A=t^n+\sum_{i=0}^{n-1}c_it^i\in\mathfrak{O}_l[t]$. If $A$ is $\GL_n(\mathfrak{O})$-conjugate to an anti-hermitian matrix, then $c_i^{\circ}=(-1)^{n-i}c_i$ for $0\leq i<n$. 
\end{lemma}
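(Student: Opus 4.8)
The plan is to reduce to the case where $A$ is itself anti-hermitian and then to compute its characteristic polynomial in two different ways. Since the characteristic polynomial is invariant under conjugation, I may replace $A$ by the anti-hermitian matrix it is $\GL_n(\mathfrak{O})$-conjugate to; moreover an anti-hermitian matrix over $\mathfrak{O}_l$ satisfies $A^\circ=-A$ as an identity of matrices over $\mathfrak{O}_l$ (it is the reduction modulo $\mathfrak{P}^l$ of an anti-hermitian matrix over $\mathfrak{O}$, and $\circ$ descends to $\mathfrak{O}_l$ because $\mathfrak{P}^l$ is $\circ$-stable). So it suffices to show: if $A^\circ=-A$ over $\mathfrak{O}_l$, then the coefficients of $f_A=t^n+\sum_{i=0}^{n-1}c_it^i$ satisfy $c_i^\circ=(-1)^{n-i}c_i$.

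For this I would compute $\det(t\I_n-A^\circ)$ twice. Writing $\overline{M}$ for the matrix obtained from $M$ by applying $\circ$ to each entry, so that $A^\circ=\overline{A}^{\tr}$, on the one hand $\det(t\I_n-A^\circ)=\det\big((t\I_n-\overline{A})^{\tr}\big)=\det(t\I_n-\overline{A})$ because transposition preserves the determinant, and since applying $\circ$ entrywise to $A$ has the effect of applying $\circ$ to the coefficients of $f_A$ (the variable $t$ being untouched) this equals $t^n+\sum_{i=0}^{n-1}c_i^\circ t^i$. On the other hand, using $A^\circ=-A$ one gets $\det(t\I_n-A^\circ)=\det(t\I_n+A)=(-1)^n f_A(-t)$, and expanding $f_A(-t)$ and using $(-1)^{n+i}=(-1)^{n-i}$ shows this equals $t^n+\sum_{i=0}^{n-1}(-1)^{n-i}c_i t^i$. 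Comparing the coefficient of each $t^i$ then gives $c_i^\circ=(-1)^{n-i}c_i$ for $0\le i<n$, which is the assertion.

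I do not expect a genuine obstacle; the argument is essentially bookkeeping. The one point deserving care is the first of the two computations, namely keeping the transpose, the coefficientwise Galois action, and the auxiliary indeterminate $t$ straight, and in particular making sure $\circ$ acts only on the entries of $A$ --- equivalently on the coefficients of $f_A$ --- and never on $t$. It is also worth noting why only one implication is stated: the computation above shows that $c_i^\circ=(-1)^{n-i}c_i$ is \emph{necessary} for $\GL_n(\mathfrak{O})$-conjugacy to an anti-hermitian matrix, but it gives no reason for the condition to be sufficient, and indeed establishing a converse would require a genuinely different argument.
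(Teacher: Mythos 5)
Your argument is correct: reducing to an anti-hermitian representative (using that $\mathfrak{P}^l$ is $\circ$-stable and that conjugation preserves the characteristic polynomial) and computing $\det(t\I_n-A^{\circ})$ both as the coefficientwise $\circ$-conjugate of $f_A$ and as $(-1)^n f_A(-t)$ is exactly the standard proof; the paper itself gives no proof but quotes the lemma from \cite{AKOV3}, where the argument is essentially the one you describe. Your closing remark is also apt: the computation only yields necessity of the coefficient condition, which is all the lemma asserts.
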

The following lemma highlights the importance of type in the context of computing representation zeta functions:
\begin{lemma}
If matrices $A$ and $B$ in $\g^\epsilon_n(\mathfrak{o}_1)$ are of the same type, then their centralisers are isomorphic.
\end{lemma}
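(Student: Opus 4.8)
The plan is to realise each centraliser as the automorphism group of a module over a truncated polynomial ring and then to check that these automorphism groups depend only on the combinatorial data encoded by the type. Since the centraliser of a matrix is conjugate, hence isomorphic, to that of any similar matrix, I would first replace $A$ and $B$ by their standard block representatives $\diag\{U_{\nu_A(f_1)}(f_1),\ldots\}$, where the $f_i$ are the distinct irreducible factors of the characteristic polynomial, and similarly for $B$; note that $A$ and $B$ themselves need not be similar, so this reduction is only the starting point.

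Treating $\epsilon=1$ first: view $V=\mathfrak{o}_1^n$ as an $\mathfrak{o}_1[t]$-module with $t$ acting via $A$, so that $Z_{\GL_n(\mathfrak{o}_1)}(A)=\Aut_{\mathfrak{o}_1[t]}(V)$. Decomposing $V=\bigoplus_i V_i$ into its primary components for the distinct irreducible factors $f_i$ gives $\Aut_{\mathfrak{o}_1[t]}(V)=\prod_i\Aut_{\mathfrak{o}_1[t]}(V_i)$, since $\Hom_{\mathfrak{o}_1[t]}(V_i,V_j)=0$ for $i\ne j$. For the standard representative, $V_i\cong\bigoplus_j\mathfrak{o}_1[t]/(f_i^{l_{j}})$ where $\nu_A(f_i)=(l_1\ge l_2\ge\cdots)$. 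Because $\mathfrak{o}_1\cong\mathbb{F}_q$ is finite, $f_i$ is separable, so Hensel's lemma provides a section of the residue map $\mathfrak{o}_1[t]/(f_i^{l_j})\to\mathfrak{o}_1[t]/(f_i)\cong\mathbb{F}_{q^{d_i}}$; this makes $\mathfrak{o}_1[t]/(f_i^{l_j})$ an $\mathbb{F}_{q^{d_i}}$-algebra, and applying Nakayama's lemma to the maximal ideal generated by the image of $f_i$ and comparing dimensions identifies it with the chain ring $\mathbb{F}_{q^{d_i}}[\epsilon]/(\epsilon^{l_j})$, where $d_i=\deg f_i$. Hence $V_i$, as a module over this chain ring, and with it $\Aut_{\mathfrak{o}_1[t]}(V_i)$, depends only on $d_i$ and the partition $\nu_A(f_i)$.

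Now $A$ and $B$ being of the same type means precisely that $r_A(d,\nu)=r_B(d,\nu)$ for every degree $d$ and non-zero partition $\nu$, i.e.\ there is a bijection between the multiset of pairs (irreducible factor, partition) attached to $A$ and that attached to $B$ preserving degrees and partitions; by the previous paragraph the matched centraliser factors are isomorphic, and the product of these isomorphisms yields $Z_{\GL_n(\mathfrak{o}_1)}(A)\cong Z_{\GL_n(\mathfrak{o}_1)}(B)$. For $\epsilon=-1$ I would argue analogously: by Lemma~3.2 of \cite{AKOV3} we may again pass to a standard representative, and $V=\mathfrak{O}_1^n$ now carries a non-degenerate hermitian form $h$ with respect to which $A$ is skew-adjoint, so $Z_{\GU_n(\mathfrak{o}_1)}(A)$ is the group of $h$-isometries lying in $\Aut_{\mathfrak{O}_1[t]}(V)$. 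By Lemma~3.5 of \cite{AKOV3} the involution $f(t)\mapsto(-1)^{\deg f}f^{\circ}(-t)$ permutes the irreducible factors; $h$ restricts non-degenerately to each self-paired primary component and makes the two components of each genuine pair dual to one another. Thus $Z_{\GU_n(\mathfrak{o}_1)}(A)$ again splits as a direct product whose factor for a genuine pair is a general linear group of a module over $\mathbb{F}_{q^{2d_i}}[\epsilon]/(\epsilon^{l_1})$ and whose factor for a self-paired component is the isometry group of a hermitian module — in every case depending only on $d_i$, $\nu_A(f_i)$ and whether $f_i$ is self-paired, all of which is part of the type — and one concludes as before.

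The main obstacle I anticipate is the unitary case: one must verify that the hermitian form is compatible with the primary decomposition, tracking how $\circ$ combined with $t\mapsto-t$ acts on the factors and lifts through the nilpotent thickenings $\mathfrak{O}_1[t]/(f_i^{l_j})\to\mathfrak{O}_1[t]/(f_i)$, and that the resulting isometry group of each self-paired factor is independent of the particular non-degenerate hermitian form placed on a factor of prescribed discrete type. This is where the separability of the $f_i$ and the essential uniqueness of non-degenerate hermitian forms over the relevant $\mathbb{F}_{q^2}$-chain rings are needed; the $\GL$ case, by contrast, is essentially the classical fact that the conjugacy type of a centraliser in $\GL_n(\mathbb{F}_q)$ is determined by the Jordan type.
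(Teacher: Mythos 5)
Your proof is correct, and it is substantially more careful than the paper's own argument, which merely notes that $A$ and $B$ are similar to block matrices $\diag\{U_{\nu_1}(f_1),\dots,U_{\nu_N}(f_N)\}$ and $\diag\{U_{\nu_1}(g_1),\dots,U_{\nu_N}(g_N)\}$ with matching degrees and partitions and declares the conclusion clear. For $\epsilon=1$ the two arguments coincide in substance; yours just supplies the standard module-theoretic content (primary decomposition of the $\mathfrak{o}_1[t]$-module, vanishing of Homs between distinct primary parts, and the identification of $\mathfrak{o}_1[t]/(f_i^{l_j})$ with a truncated polynomial ring over $\mathbb{F}_{q^{d_i}}$ via separability and Hensel), which is exactly what makes ``depends only on $d_i$ and $\nu$'' precise. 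For $\epsilon=-1$ your route is genuinely different and covers a point the paper's proof silently skips: the representatives $U_{\nu}(f)$ are not anti-hermitian, and knowing that $A$ and $B$ are $\GL_n(\mathbb{F}_{q^2})$-conjugate to canonical forms of the same shape only identifies their $\GL$-centralisers, whereas the lemma concerns centralisers in $\GU_n(\mathfrak{o}_1)$; one really does need your analysis of how the hermitian form interacts with the primary decomposition (dual pairs of factors contributing general linear groups, self-paired factors contributing isometry groups) together with the essential uniqueness of non-degenerate hermitian forms over the relevant finite (chain) rings. Two small remarks. First, your appeal to Lemma~3.2 of \cite{AKOV3} at the start of the unitary case is not really needed (and cannot be used to pass to $U_\nu(f)$ itself, which is not anti-hermitian); your subsequent argument is intrinsic to $(V,h,A)$ and carries the weight on its own. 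Second, you correctly assume that for $\epsilon=-1$ the ``type'' records, beyond the degree/partition data, which irreducible factors are fixed by the involution $f\mapsto(-1)^{\deg f}f^{\circ}(-t)$ and which occur in swapped pairs; this is indeed how the paper uses the notion (its tables list such cases as distinct types), and it is essential, since with degrees and partitions alone the statement would fail — for instance two distinct eigenvalues with $\alpha_i+\alpha_i^{\circ}=0$ give centraliser $\GU_1\times\GU_1$ of order $(q+1)^2$, while a swapped pair $\alpha_1=-\alpha_2^{\circ}$ gives $\mathbb{F}_{q^2}^{*}$ of order $q^2-1$. Making that convention explicit would be worth a sentence, but it is a matter of stating the hypothesis, not a gap in your argument.
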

\begin{proof}
Since $A$ and $B$ are of the same type there exist irreducible polynomials $f_1,...,f_N$ and $g_1,...,g_N$ such that $\deg(f_i)=\deg(g_i)=d_i$ and positive integers $k_i$ such that the characteristic polynomial of $A$ is $f_1^{k_1}...f_N^{k_N}$and the characteristic polynomial of $B$ is $g_1^{k_1}...g_N^{k_N}$. Moreover, there exist partitions $\nu_i$ of the $k_i$ such that $A$ is similar to 
$$
\diag\{U_{\nu_1}(f_1),...,U_{\nu_N}(f_N)\}
$$
and $B$ is similar to
$$
\diag\{U_{\nu_1}(g_1),...,U_{\nu_N}(g_N)\}.
$$
From this it is clear that $A$ and $B$ have isomorphic centralisers.
\end{proof}

For any group $\G^{\epsilon}_n(\mathfrak{o}_1)$, using equation (\ref{eqntype}) we may wrtie down a complete and irredundant list of representatives of types that are characterised by sets of irreducible polynomials and their associated partitions. This list will not depend on the underlying field, $\mathfrak{o}_1$, however each of these representatives may be parameterised by coefficients that do depend on the underlying field. Let $\mathbb{T}^{\epsilon}_n$ denote the set of representatives of types in $\g^{\epsilon}_n(\mathfrak{o}_1)$ and for each $A\in\mathbb{T}^{\epsilon}_n$, let $n_A$ be the total number of similarity classes of type $A$. We will also write $Z_{\G^{\epsilon}_n(\mathfrak{o}_1)}(A)$ for the centraliser of a matrix of type $A$. Then
\begin{eqnarray}\label{eqnzeta}
\zeta_{\G^{\epsilon}_n(\mathfrak{o}_2)}(s)=\sum_{A\in\mathbb{T}^{\epsilon}_n}n_{A}\zeta_{Z_{\G^{\epsilon}_n(\mathfrak{o}_1)}(A)}(s)|\G^{\epsilon}_n(\mathfrak{o}_1):Z_{\G^{\epsilon}_n(\mathfrak{o}_1)}|^{-s}.
\end{eqnarray}

\subsection{Representations of $\G^{\epsilon}_2(\mathfrak{o}_l)$}

Before proceeding with the proof of Theorem \ref{main} we summarise what is already known about the number of complex irreducible representations of each degree of the groups $\G^{\epsilon}_2(\mathfrak{o}_l)$ for $\epsilon\in\{\pm 1\}$ and $l = 1, 2$. For $(\epsilon, n, l) = (1,2,1)$ see Steinberg \cite{Steinberg2}. For $(\epsilon, n, l) = (-1,2,1)$ see Ennola \cite{Ennola2}. The number of irreducible representations of the groups $\G^{\epsilon}_2(\mathfrak{o}_1)$ is given in Table \ref{table21}. Details on the irreducible representations for $(\epsilon, n, l) = (1,2,2)$ are described by Nagornyi \cite{Nag} and Onn \cite{Onn}. For $(\epsilon, n, l) = (-1,2,2)$ see \cite{AKOV3}. Table \ref{table22} is a complete and irredundant list of representatives of similarity class types of $\g^{\epsilon}_2(\mathfrak{o}_1)$ under the action by $\G^{\epsilon}_2(\mathfrak{o}_2)$. Using equation (\ref{eqnzeta}) one can obtain the representation zeta function of the groups $\G^{\epsilon}_2(\mathfrak{o}_2)$.

\begin{table}[h!]
\small
\centering
\caption{Representations of $\G^{\epsilon}_2(\mathfrak{o}_1)$}
  \begin{center}    
    \label{table21}
    \begin{tabular}{| c | c |}
      \hline
      Number of irreducible representations & Degree \\
      \hline
 &\\
		$q-\epsilon$ & 
		$1$\\
& \\
		$q-\epsilon$ & 
		$q$\\
&\\
		$\frac{1}{2}(q-\epsilon-1)(q-\epsilon)$ & 
		$q+\epsilon$\\
 &\\    
		$\frac{1}{2}(q+\epsilon-1)(q-\epsilon)$ & 
		$q-\epsilon$\\
 &\\
     \hline
    \end{tabular}
  \end{center}
\end{table}

\begin{landscape}
\begin{table}[t]
\small
\centering
\caption{Representatives of similarity classes in $\g^{\epsilon}_2(\mathfrak{o}_1)$ under $\G^{\epsilon}_2(\mathfrak{o}_2)$}
  \begin{center}
    \label{table22}
    \begin{tabular}{| c | c | c | c | c |}
      \hline
Type $A\in\mathbb{T}^{\epsilon}_2$& Parameter & Number of similarity & Isomorphism type & Index of $Z$ \\
   &  & classes, $n_A$ &$Z$ of $Z_{\G^{\epsilon}_2(\mathfrak{o}_1)}(A)$ & in $\G^{\epsilon}_2(\mathfrak{o}_1)$\\
      \hline
&&& &\\
      $\parbox{4.5cm}{$\{(t-\alpha)^{(1,1)}\}$}$  &
		$\parbox{4.2cm}{$\epsilon=1:\alpha\in\mathbb{F}_q$\\$\epsilon=-1: \alpha+\alpha^\circ=0$}$&
		$q$ & 
		$\G^{\epsilon}_2(\mathfrak{o}_1)$ &
		$1$\\
&&& &\\
      $\parbox{4.5cm}{$\{(t-\alpha)^{(2)}\}$}$  & 
		$\parbox{4.2cm}{$\epsilon=1:\alpha\in\mathbb{F}_q$\\$\epsilon=-1: \alpha+\alpha^\circ=0$}$&
		$q$ & 
		$\G^{\epsilon}_1(\mathfrak{o}_2)$ &
		$q^2-1$\\
&&& &\\
      $\parbox{4.5cm}{$\{(t-\alpha_1)^{(1)},(t-\alpha_2)^{(1)}\}$}$  & 
		$\parbox{4.2cm}{$\epsilon=1:\alpha_1\neq\alpha_2\in\mathbb{F}_q$\\$\epsilon=-1: \alpha_i+\alpha_i^\circ=0$\\$\alpha_1\neq\alpha_2$}$&
		$\frac{1}{2}q(q-1)$ & 
		$\G^{\epsilon}_1(\mathfrak{o}_1)\times\G^{\epsilon}_1(\mathfrak{o}_1)$ &
		$q(q+\epsilon)$\\
&&& &\\    
      $\parbox{4.5cm}{$\epsilon = 1: \{f^{(1)}\}$\\$\epsilon=-1:\{(t-\alpha_1)^{(1)},(t-\alpha_2)^{(1)}\}$}$  & 
		$\parbox{4.2cm}{$\epsilon=1:f$ irreducible quadratic\\$\epsilon=-1:\alpha_1=-\alpha_2^\circ$ distinct}$&
		$\frac{1}{2}q(q-1)$ & 
		$\mathbb{F}_{q^2}^*$ &
		$q(q-\epsilon)$\\
&& &&\\     
       \hline
    \end{tabular}
  \end{center}
\end{table}
\end{landscape}

\subsection{Representations of $\G^{\epsilon}_3(\mathfrak{o}_l)$}

We summarise what is already known about the number of complex irreducible representations of each degree of the groups $\G^{\epsilon}_3(\mathfrak{o}_l)$ for $\epsilon\in\{\pm 1\}$ and $l = 1, 2$. For $(\epsilon, n, l) = (1,3,1)$ see Steinberg \cite{Steinberg2}. For $(\epsilon, n, l) = (-1,3,1)$ see Ennola \cite{Ennola2}. The number of irreducible representations of the groups $\G^{\epsilon}_3(\mathfrak{o}_1)$ is given in Table \ref{table31}.

\begin{table}[h!]
\small
\centering
\caption{Representations of $\G^{\epsilon}_3(\mathfrak{o}_1)$}
  \begin{center}
    \label{table31}
    \begin{tabular}{| c | c |}
       \hline
      Number of irreducible representations & Degree \\
       \hline
 &\\
		$q-\epsilon$ & 
		$1$\\
 &\\
		$q-\epsilon$ & 
		$q(q+\epsilon)$\\
 &\\
		$q-\epsilon$ & 
		$q^3$\\
 &\\     
		$(q-\epsilon-1)(q-\epsilon)$ & 
		$q^2+\epsilon q+1$\\
 &\\  
		$(q-\epsilon-1)(q-\epsilon)$ & 
		$q(q^2+\epsilon q+1)$\\
 &\\    
		$\frac{1}{6}(q-\epsilon-2)(q-\epsilon-1)(q-\epsilon)$ & 
		$(q+\epsilon)(q^2+\epsilon q+ 1)$\\
 &\\ 
		$\frac{1}{2}(q+\epsilon-1)(q-\epsilon)^2$ & 
		$q^3-\epsilon$\\
 &\\
		$\frac{1}{3}q(q^2-1)$ &
		$(q^2-1)(q-\epsilon)$\\
  &\\    
       \hline
    \end{tabular}
  \end{center}
\end{table}
  
For $l\geq 2$ we define $\G^{\epsilon}_{(l,1)}$ to be the group $H^{\epsilon}\rtimes D_l^{\epsilon}$, where
\begin{eqnarray*}
H^{1}&:=&\left\{\begin{pmatrix}1 & \alpha & \gamma \\ 0 & 1 & \beta \\ 0 & 0 & 1\end{pmatrix}:\alpha,\beta,\gamma\in\mathbb{F}_{q}\right\};\\
H^{-1}&:=&\left\{\begin{pmatrix}1 & \alpha & \gamma \\ 0 & 1 & \bar{\alpha} \\ 0 & 0 & 1\end{pmatrix}:\alpha,\gamma\in\mathbb{F}_{q^2}, \alpha\bar{\alpha} = \gamma+\bar{\gamma}\right\};\\
D_l^{\epsilon}&:=&\left\{\begin{pmatrix}a & 0 & 0\\0 & b & 0\\0 & 0 & a\end{pmatrix}: a\in \G^{\epsilon}_1(\mathfrak{o}_{l-1}), b\in \G^{\epsilon}_1(\mathfrak{o}_{1})\right\}.
\end{eqnarray*}

Before describing the irreducible representations of the groups $\G^{\epsilon}_3(\mathfrak{o}_2)$ we first describe the irreducible representations of the groups $\G^{\epsilon}_{(l,1)}$.

\begin{proposition}
The complex irreducible representations of the groups $\G^{\epsilon}_{(l,1)}$ for $l\geq 2$ are given in the Table 4. Hence, its zeta function is given by
$$
\zeta_{\G^\epsilon_{(l,1)}}(s)=q^{l-2}(q-\epsilon)((q-\epsilon)+(q+\epsilon)(q-\epsilon)^{-s}+(q-1)(q-\epsilon)q^{-s}).
$$
\end{proposition}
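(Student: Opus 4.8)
The plan is to work directly with the semidirect product structure $\G^{\epsilon}_{(l,1)} = H^{\epsilon}\rtimes D_l^{\epsilon}$ and apply Clifford theory with respect to the normal abelian subgroup $Z(H^{\epsilon})$, or more efficiently the whole group $H^{\epsilon}$ together with its action by $D_l^{\epsilon}$. First I would record the basic invariants: $|H^{\epsilon}| = q^3$ in both cases (for $\epsilon=-1$ one checks that the conditions $\alpha\bar\alpha=\gamma+\bar\gamma$ cut the $q^2\cdot q^2$ parameter space down to $q^3$), and $|D_l^{\epsilon}| = |\G^\epsilon_1(\mathfrak{o}_{l-1})|\cdot|\G^\epsilon_1(\mathfrak{o}_1)| = q^{l-2}(q-\epsilon)^2$, so $|\G^\epsilon_{(l,1)}| = q^{l+1}(q-\epsilon)^2$. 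The centre $Z(H^\epsilon)$ is the set of matrices with $\alpha=\beta=0$ (resp. $\alpha=0$), which is cyclic-by-structure of order $q$, and $H^\epsilon/Z(H^\epsilon)\cong \mathbb{F}_q^2$ (resp. $\mathbb{F}_{q^2}$, of order $q^2$). The irreducible characters of $H^\epsilon$ are then the classical Heisenberg-type picture: $q^2$ linear characters factoring through $H^\epsilon/Z(H^\epsilon)$, and $q-1$ characters of degree $q$, one for each nontrivial character of $Z(H^\epsilon)$.

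Next I would analyse the $D_l^\epsilon$-action on $\Irr(H^\epsilon)$ and invoke Clifford theory. Write $\delta=\diag(a,b,a)\in D_l^\epsilon$; conjugation scales the off-diagonal entries ($\alpha\mapsto a b^{-1}\alpha$ type relations, with the appropriate $\mathfrak{o}_{l-1}$-reductions), and in particular $D_l^\epsilon$ fixes every character of $Z(H^\epsilon)$ since conjugation acts trivially on the $(1,3)$-entry $\gamma$ (note $a$ and $a$ sit in positions $1,1$ and $3,3$, and their contributions to $\gamma$ combine via $\mathrm{N}$-type terms; I will need to check this carefully, but the upshot should be that $Z(H^\epsilon)$ is central in the whole group modulo the length-one reduction). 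For the $q-1$ characters of $H^\epsilon$ of degree $q$: each is $D_l^\epsilon$-invariant, extends to $H^\epsilon\rtimes D_l^\epsilon$ after passing to the relevant Heisenberg–Weil construction, and its extensions are indexed by $\Irr(D_l^\epsilon)$, a group of order $q^{l-2}(q-\epsilon)^2$ with character degrees understood from the abelian-by-abelian structure of $D_l^\epsilon$ (in fact $D_l^\epsilon$ is abelian, a product of two unit groups, so it contributes $q^{l-2}(q-\epsilon)^2$ linear characters). This yields $(q-1)\cdot q^{l-2}(q-\epsilon)^2$ irreducibles, but they do not all have degree $q$: the correct count is that each degree-$q$ character of $H^\epsilon$ gives a single degree-$q$ summand in a Mackey/Heisenberg sense only if its stabiliser in $D_l^\epsilon$ acts suitably — so I would instead fibre over orbits of $D_l^\epsilon$ on these $q-1$ characters. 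For the $q^2$ linear characters of $H^\epsilon$: $D_l^\epsilon$ permutes them through its action on $H^\epsilon/Z(H^\epsilon)$, and I would count orbits and stabilisers, with the trivial-ish orbit (characters fixed by all of $D_l^\epsilon$) contributing $q-\epsilon$ linear characters of the full group and $q-\epsilon$ of degree... — here the shape of the claimed answer, namely $\zeta = q^{l-2}(q-\epsilon)\bigl((q-\epsilon) + (q+\epsilon)(q-\epsilon)^{-s} + (q-1)(q-\epsilon)q^{-s}\bigr)$, tells me the final tally: $(q-\epsilon)\cdot q^{l-2}(q-\epsilon)$ representations of degree $1$... no — degree $q-\epsilon$; $(q+\epsilon)\cdot q^{l-2}(q-\epsilon)$ of degree $q-\epsilon$... let me re-read: the three terms correspond to $q^{l-2}(q-\epsilon)^2$ reps of degree $1$, $q^{l-2}(q-\epsilon)(q+\epsilon)$ reps of degree $q-\epsilon$, and $q^{l-2}(q-\epsilon)(q-1)$ reps of degree $q$. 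So I must produce exactly these three families and check the degree bookkeeping against $\sum \chi(1)^2 = |\G^\epsilon_{(l,1)}| = q^{l+1}(q-\epsilon)^2$ as a sanity check: $q^{l-2}(q-\epsilon)^2\cdot 1 + q^{l-2}(q-\epsilon)(q+\epsilon)(q-\epsilon)^2 + q^{l-2}(q-\epsilon)(q-1)q^2$, which should collapse to $q^{l+1}(q-\epsilon)^2$.

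The main obstacle will be the careful Clifford-theoretic orbit–stabiliser analysis for the $\epsilon=-1$ case, where $H^{-1}$ is a genuinely unitary Heisenberg group over $\mathbb{F}_{q^2}$ with the norm-one/trace condition $\alpha\bar\alpha=\gamma+\bar\gamma$, and where the $D_l^{-1}$-action on $\Irr(H^{-1})$ involves the nontrivial Galois twist; verifying that this produces the "$\epsilon$-uniform" answer (the same formula with $\epsilon=-1$ substituted, not merely a parallel computation) is the delicate point, and is precisely the kind of place where Ennola-type sign behaviour enters. I expect the linear and degree-$q$ families over $H^\epsilon$ to behave cleanly, with the subtlety concentrated in counting how many $D_l^\epsilon$-orbits of nontrivial linear characters of $H^\epsilon$ have which stabilisers — this is what separates the "$(q-\epsilon)$" from the "$(q+\epsilon)$" coefficients. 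Once the orbit count is pinned down, assembling $\zeta_{\G^\epsilon_{(l,1)}}(s)$ via $(\ref{eqnzeta})$-style summation over the induced representations, and factoring out $q^{l-2}(q-\epsilon)$, is routine algebra; I would present the orbit data in the promised Table 4 and read off the zeta function from it.
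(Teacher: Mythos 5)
Your overall strategy is the same as the paper's (Clifford/Mackey theory over the Heisenberg-type normal subgroup $H^{\epsilon}$, splitting $\Irr(H^{\epsilon})$ into the $q^2$ linear characters through $Q^{\epsilon}=H^{\epsilon}/Z^{\epsilon}$ and the $q-1$ degree-$q$ characters above nontrivial central characters), but as written the proposal stops exactly where the content of the proposition lies. The decisive step is the orbit--stabiliser analysis of $D_l^{\epsilon}$ acting on $Q^{\epsilon\vee}$, which you explicitly defer and then replace by reading the coefficients off the claimed formula. For $\epsilon=-1$ this is the delicate case you flag: one identifies $Q^{-1\vee}\cong\mathbb{F}_{q^2}$, checks that $\diag(a,b,a)$ acts by $u\mapsto a^{-1}bu$, i.e.\ through multiplication by norm-one elements of $\mathbb{F}_{q^2}^{*}$ (a cyclic group of order $q+1$, with $a$ acting only through its reduction to $\GU_1(\mathfrak{o}_1)$), so the nonzero elements fall into $q-1$ orbits of size $q+1$, each with stabiliser of order $q^{l-2}(q+1)$ in $D_l^{-1}$. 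Mackey then gives $q^{l-2}(q+1)^2$ linear characters from the zero orbit and $q^{l-2}(q^2-1)$ irreducibles of degree $q+1=q-\epsilon$ from the others; this computation, not any general principle, is what produces the $(q+\epsilon)$ coefficient and the degree $q-\epsilon$, so without it the proof is circular. Similarly, for the degree-$q$ family you correctly note that every central character is $D_l^{\epsilon}$-fixed (conjugation by $\diag(a,b,a)$ leaves the $(1,3)$-entry $\gamma$ unchanged), but you then waver about whether all constituents have degree $q$; they do, provided one proves that each $\rho_{\chi,H^{\epsilon}}$ actually extends to $\G^{\epsilon}_{(l,1)}$ (a priori the Clifford cocycle could be nontrivial). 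The paper settles this constructively: extend $\chi$ from $Z^{\epsilon}=T^{\epsilon}\cap H_1^{\epsilon}$ to $T^{\epsilon}H_1^{\epsilon}$, where $H_1^{\epsilon}$ is a maximal abelian subgroup of $H^{\epsilon}$ and $T^{\epsilon}=Z^{\epsilon}D_l^{\epsilon}$, and induce to get a $q$-dimensional extension; Gallagher then gives exactly $|T^{\epsilon}/Z^{\epsilon}|=q^{l-2}(q-\epsilon)^2$ extensions per central character, all of degree $q$. Your appeal to ``the relevant Heisenberg--Weil construction'' needs to be replaced by such an argument.

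A smaller but symptomatic point: your read-off of the third family is wrong --- the term $(q-1)(q-\epsilon)q^{-s}$ inside the bracket corresponds to $q^{l-2}(q-\epsilon)^2(q-1)$ representations of degree $q$, not $q^{l-2}(q-\epsilon)(q-1)$ --- and with your count the sum-of-squares check you propose does not collapse to $|\G^{\epsilon}_{(l,1)}|=q^{l+1}(q-\epsilon)^2$, whereas the correct counts give $q^{l-2}(q-\epsilon)^2\bigl(1+(q^2-1)+(q-1)q^2\bigr)=q^{l+1}(q-\epsilon)^2$. Had you actually carried out that sanity check rather than only proposing it, the slip would have surfaced; as it stands, the bookkeeping that is supposed to anchor the proof was never verified.
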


\begin{proof}
This follows by adapting the proof of \cite[Proposition 6.9]{AKOV3} or \cite[Theorem 4.1]{Onn}. We will provide a sketch proof for completion. 
The group $H^{\epsilon}$ has $q-1$ irreducible representations of degree $q$ that correspond to the non-trivial characters of the centre, and $q^2$ linear characters factoring through its abelianisation by its centre $Z^{\epsilon} = Z(H^{\epsilon})$. Write $Q^{\epsilon} := H^{\epsilon}/Z^{\epsilon}=\mathbb{F}_q\times\mathbb{F}_q$. For each of the $q-1$ non-trivial characters of the centre, $\chi$, there is a unique irreducible representation $\rho_{\chi, H^\epsilon}$ of $H^{\epsilon}$ of dimension $q$. The remaining representations of $H^\epsilon$ correspond to the trivial character of the centre and hence factor through the quotient $Q^{\epsilon}$. Each of the representations $\rho_{\chi, H^\epsilon}$ is stabilised by $H^{\epsilon}$. Let $T^\epsilon=Z^{\epsilon}D^{\epsilon}\cong\G^\epsilon(\mathfrak{o}_l)\times\G^\epsilon(\mathfrak{o}_1)$ and let $H^\epsilon_1$ be a maximal abelian subgroup of $H^{\epsilon}$. Then $\chi$ can be extended from $Z^{\epsilon}=T^{\epsilon}\cap H^\epsilon_1$ to $T^\epsilon H^\epsilon_{1}$. Inducing the extension from $T^\epsilon H^\epsilon_1$ to $\G^\epsilon_{(l,1)}$ gives a $q$-dimensional representation which must extend $\rho_{\chi, H^\epsilon}$. This yields $|\G^{\epsilon}_{(l,1)}/H^{\epsilon}| = |T^\epsilon/Z^{\epsilon}|=q^{l-2}(q-\epsilon)^2$ different extensions of $\rho_{\chi,H^\epsilon}$ and so $q^{l-2}(q-\epsilon)^2(q-1)$ representations of $\G^{\epsilon}_{(l,1)}$. 

The remaining irreducible characters of $\G^{\epsilon}_{(l,1)}$ factor through its quotient by $Z^\epsilon$. The case $\epsilon = 1$ is done in \cite{Onn}. If $\epsilon = -1$ identify $Q^{-1}$ and its dual $Q^{-1\vee}$ with the additive group $\mathbb{F}_{q^2}$. The action of $\mbox{diag}(a,b,a)\in D_l^{-1}$ on $Q^{-1\vee}$ is given by $\mathbb{F}_{q^2}\ni u\mapsto(a^{-1}bu)$. The orbits of $D_l^{-1}$ on $Q^{-1\vee}$ are:

\begin{table}[!]
\small
\centering
  \begin{center}
    \label{tab:table1}
    \begin{tabular}{| c | c | c |}
      \hline
      Orbit & Parameter & Stabiliser in $D$ \\
      \hline
& &\\
		$[0,0]$ & 
		$-$ &
		$\GU_1(\mathfrak{o}_{l-1})\times\GU_1(\mathfrak{o}_1)$\\
& &\\
		$[s]$ & 
		$s\in\mathbb{F}_{q^2}/\GU_1(\mathbb{F}_q)$ &
		$\GU_1(\mathfrak{o}_{l-1})$\\
& &\\

      \hline
    \end{tabular}
  \end{center}
\end{table}

By Mackey's method for semi-direct products (see \cite{JPS}, Section 8.2) this yields $|\GU_1(\mathfrak{o}_{l-1})\times\GU_1(\mathfrak{o}_1)| = q^{l-2}(q+1)^2$ linear characters and $|\mathbb{F}_{q^2}^*/\GU_1(\mathfrak{o}_{l-1})||\GU_1(\mathfrak{o}_1)| = q^{l-2}(q^2-1)$ irreducible characters of degree $|\GU_1(\mathfrak{o}_1)| = (q+1)$ of $\G_{(l,1)}^{-1}$. Putting this all together yields the required result.
\end{proof}

\begin{table}[!h!]
\small
\centering
\caption{Representations of $\G^{\epsilon}_{(l,1)}$}
  \begin{center}
    \label{tab:table1}
    \begin{tabular}{| c | c |}
       \hline
      Number of irreducible representations & Degree \\
      \hline
 &\\
		$q^{l-2}(q-\epsilon)^2$ & 
		$1$\\
& \\
		$q^{l-2}(q^2-1)$ & 
		$q-\epsilon$\\
&\\
		$q^{l-2}(q-\epsilon)^2(q-1)$ & 
		$q$\\
 &\\     
      \hline
    \end{tabular}
  \end{center}
\end{table}

The irreducible representations of $\G^{\epsilon}_n(\mathfrak{o}_l)$ for $(\epsilon, n, l) = (1,3,2), (-1,3,2)$ can be found in \cite{AKOV3}. Table \ref{table32} is a complete and irredundant list of representatives of similarity class types of $\g^{\epsilon}_3(\mathfrak{o}_1)$ under the action by $\G^{\epsilon}_3(\mathfrak{o}_2)$. Using equation (\ref{eqnzeta}) one can obtain the representation zeta function of the groups $\G^{\epsilon}_3(\mathfrak{o}_2)$.

\begin{landscape}
\begin{table}[t]
\small
\centering
\caption{Representatives of similarity classes in $\g^{\epsilon}_3(\mathfrak{o}_1)$ under $\G^{\epsilon}_3(\mathfrak{o}_2)$}
  \begin{center}
    \label{table32}
    \begin{tabular}{| c | c | c | c | c|}
       \hline
Type $A\in\mathbb{T}^{\epsilon}_3$ & Parameter & Number of similarity & Isomorphism type $Z$ of& Index of $Z$ in \\
  &   & classes, $n_A$ & $Z_{\G^{\epsilon}_3(\mathfrak{o}_1)}(A)$ & $\G^{\epsilon}_3(\mathfrak{o}_1)$\\
       \hline
&& &&\\
      $\parbox{6cm}{$\{(t-\alpha)^{(1,1,1)}\}$}$  &
		$\parbox{5.7cm}{$\epsilon=1:\alpha\in\mathbb{F}_q$\\$\epsilon=-1: \alpha+\alpha^\circ=0$}$&
		$q$ & 
		$\G^{\epsilon}_3(\mathfrak{o}_1) $ & 
		$1$\\
&&& &\\
      $\parbox{6cm}{$\{(t-\alpha)^{(2,1)}\}$}$  &
		$\parbox{5.7cm}{$\epsilon=1:\alpha\in\mathbb{F}_q$\\$\epsilon=-1: \alpha+\alpha^\circ=0$}$&
		$q$ & 
		$\G^{\epsilon}_{(2,1)}$ & 
		$(q^3-\epsilon)(q+\epsilon)$\\
&&& &\\
      $\parbox{6cm}{$\{(t-\alpha)^{(3)}\}$}$  &
		$\parbox{5.7cm}{$\epsilon=1:\alpha\in\mathbb{F}_q$\\$\epsilon=-1: \alpha+\alpha^\circ=0$}$&
		$q$ & 
		$\G^{\epsilon}_1(\mathfrak{o}_3)$ & 
		$q(q^2-1)(q^3-\epsilon)$\\
&&& &\\     
      $\parbox{6cm}{$\{(t-\alpha_1)^{(1,1)},(t-\alpha_2)^{(1)}\}$}$  & 
		$\parbox{5.7cm}{$\epsilon=1:\alpha_1\neq\alpha_2\in\mathbb{F}_q$\\$\epsilon=-1: \alpha_i+\alpha_i^\circ=0,\,\alpha_1\neq\alpha_2$}$&
		$q(q-1)$ & 
		$\G^{\epsilon}_1(\mathfrak{o}_1)\times \G^{\epsilon}_2(\mathfrak{o}_1)$ & 
		$q^2(q+\epsilon)(q^3-\epsilon)$\\
&&& &\\   
       $\parbox{6cm}{$\{(t-\alpha_1)^{(1,1)},(t-\alpha_2)^{(1)}\}$}$  & 
		$\parbox{5.7cm}{$\epsilon=1:\alpha_1\neq\alpha_2\in\mathbb{F}_q$\\$\epsilon=-1: \alpha_i+\alpha_i^\circ=0,\,\alpha_1\neq\alpha_2$}$&
		$q(q-1)$ & 
		$\G^{\epsilon}_1(\mathfrak{o}_1)\times\G^{\epsilon}_1(\mathfrak{o}_2)$ & 
		$q^2(q+\epsilon)(q^3-\epsilon)$\\
&&& &\\    
      $\parbox{6cm}{$\{(t-\alpha_1)^{(1)},(t-\alpha_2)^{(1)},(t-\alpha_3)^{(1)}\}$}$  & 
		$\parbox{5.7cm}{$\epsilon=1:\alpha_i\in\mathbb{F}_q$ distinct\\$\epsilon=-1: \alpha_i+\alpha_i^\circ=0$ distinct}$&
		$\frac{1}{6}q(q-1)(q-2)$ & 
		$\G^{\epsilon}_1(\mathfrak{o}_1)\times\G^{\epsilon}_1(\mathfrak{o}_1)
								\times\G^{\epsilon}_1(\mathfrak{o}_1)$ & 
		$q^3(q+\epsilon)(q^2+\epsilon q+ 1)$\\
&&& &\\ 
      $\parbox{6cm}{$\epsilon = 1: \{(t-\alpha)^{(1)},f^{(1)}\}$\\$\epsilon=-1:\{(t-\alpha_1)^{(1)},(t-\alpha_2)^{(1)},(t-\alpha_3)^{(1)}\}$}$  & 
		$\parbox{5.7cm}{$\epsilon=1:\alpha\in\mathbb{F}_q,\, f$ irreducible quadratic\\$\epsilon=-1:\alpha_1+\alpha_1^{\circ} = 0,\, \alpha_2=-\alpha_3^\circ$ distinct}$&
		$\frac{1}{2}q^2(q-1)$ & 
		$\G^{\epsilon}_1(\mathfrak{o}_1)\times\mathbb{F}_{q^2}^*$ & 
		$q^3(q^3-\epsilon)$\\
&& &&\\
      $\parbox{6cm}{$\{f^{(1)}\}$}$  & 
		$\parbox{5.7cm}{$f$ irreducible cubic *}$&
		$\frac{1}{3}q(q^2-1)$ &
		$\G^{\epsilon}_1(\mathbb{F}_{q^3})$ & 
		$q^3(q^2-1)(q-\epsilon)$\\
&& &&\\     
       \hline
    \end{tabular}
  \end{center}
\begin{tablenotes}
      \small
      \item *If $\epsilon=-1$ we require that $f=t^3+\sum_{i=0}^{2}c_it^i\in\mathbb{F}_{q^2}[t]$ where $c_i^{\circ}=(-1)^{i+1}c_i$ for $0\leq i<3$.
    \end{tablenotes}
\end{table}
\end{landscape}
\clearpage

\subsection{Representations of $\G^{\epsilon}_4(\mathfrak{o}_l)$}

The irreducible representations of $\G^{\epsilon}_n(\mathfrak{o}_l)$ for $(\epsilon, n, l) = (1,4,1)$ can be found in Steinberg \cite{Steinberg2} and are displayed in Table \ref{table41}. The case $(\epsilon, n, l) = (-1,4,1)$ is described by Nozawa \cite{Nozawa}. 

\begin{table}[h!]
\small
\centering
\caption{Representations of $\G^{\epsilon}_4(\mathfrak{o}_1)$}
  \begin{center}
    \label{table41}
    \begin{tabular}{| c | c |}
       \hline
      Number of irreducible representations  & Degree \\
       \hline
 &\\
		$q-\epsilon$ & 
		$1$\\
 &\\
		$q-\epsilon$ & 
		$q(q^2+\epsilon q+1)$\\
 &\\
		$q-\epsilon$ & 
		$q^2(q^2+1)$\\
 &\\     
		$q-\epsilon$ & 
		$q^3(q^2+\epsilon q+1)$\\
 &\\   
		$q-\epsilon$ & 
		$q^6$\\
 &\\    
		$(q-\epsilon-1)(q-\epsilon)$ & 
		$(q+\epsilon)(q^3+\epsilon)$\\
 &\\ 
		$(q-\epsilon-1)(q-\epsilon)$ & 
		$q(q^2+1)(q+\epsilon)^2$\\
 &\\
		$(q-\epsilon-1)(q-\epsilon)$ &
		$q^3(q^3+\epsilon)(q+\epsilon)$\\
 &\\     
		$\frac{1}{2}(q-\epsilon-1)(q-\epsilon)$ & 
		$(q^2+1)(q^2+\epsilon q+1)$\\
  &\\  
		$(q-\epsilon-1)(q-\epsilon)$ & 
		$q(q^2+1)(q^2+\epsilon q+1)$\\
 &\\   
		$\frac{1}{2}(q-\epsilon-1)(q-\epsilon)$ & 
		$q^2(q^2+1)(q^2+\epsilon q+1)$\\
 &\\      
		$\frac{1}{2}(q-\epsilon-2)(q-\epsilon-1)(q-\epsilon)$ &
		$(q+\epsilon)(q^2+1)(q^2+\epsilon q+1)$\\
  &\\     
		$\frac{1}{2}(q-\epsilon-2)(q-\epsilon-1)(q-\epsilon)$ & 
		$q(q+\epsilon)(q^2+1)(q^2+\epsilon q+1)$\\
 &\\     
		$\frac{1}{24}(q-\epsilon-3)(q-\epsilon-2)(q-\epsilon-1)(q-\epsilon)$ &
		$(q^2+1)(q+\epsilon)^2(q^2+\epsilon q+ 1)$\\
 &\\     
		$\frac{1}{2}(q+\epsilon-1)(q-\epsilon)^2$ &  
		$(q-\epsilon)(q^2+1)(q^2+\epsilon q+1)$\\
 &\\     
		$\frac{1}{2}(q+\epsilon-1)(q-\epsilon)^2$ & 
		$q(q-\epsilon)(q^2+1)(q^2+\epsilon q+1)$\\
  &\\     
		$\frac{1}{4}q(q-2)(q-\epsilon)^2$ &
		$(q^4-1)(q^2+\epsilon q+1)$\\
 &\\      
		$\frac{1}{2}(q+\epsilon-1)(q-\epsilon)$ &  
		$q^2(q-\epsilon)^2(q^2+\epsilon q + 1)$\\
 &\\      
		$\frac{1}{2}(q+\epsilon-1)(q-\epsilon)$ & 
		$(q-\epsilon)^2(q^2+\epsilon q + 1)$\\
 &\\    
		$\frac{1}{8}(q^2-q-2)(q^2-q-2+2\epsilon)$ & 
		$(q-\epsilon)^2(q^2+1)(q^2+\epsilon q+1)$\\
 &\\ 
		$\frac{1}{3}q(q^2-1)(q-\epsilon)$ &
		$(q^4-1)(q^2-1)$\\
 &\\     
		$\frac{1}{4}q^2(q^2-1)$ & 
		$(q-\epsilon)^2(q^2-1)(q^2+\epsilon q+1)$\\
&\\
      \hline
    \end{tabular}
  \end{center}
\end{table}

\clearpage

We define $\G^{\epsilon}_{(2,1,1)}$ to be the group $E^{\epsilon}\rtimes M^{\epsilon}$ where
\begin{eqnarray*}
E^{1}&:=&\left\{\begin{pmatrix}1 & \alpha & \beta & \gamma \\ 0 & 1 & 0 & \delta \\ 0 & 0 & 1 &\eta\\ 0 & 0 & 0 & 1\end{pmatrix}:\alpha,\beta,\gamma, \delta,\eta\in\mathbb{F}_{q}\right\};\\
E^{-1}&:=&\left\{\begin{pmatrix}1 & \alpha & \beta & \gamma \\ 0 & 1 & 0 & \bar{\alpha} \\ 0 & 0 & 1 & \bar{\beta} \\ 0 & 0 & 0 & 1\end{pmatrix}:\alpha,\beta,\gamma\in\mathbb{F}_{q^2}; \alpha\bar{\alpha}+\beta\bar{\beta}=\gamma+\bar{\gamma}\right\};\\
M^{\epsilon}&:=&\left\{\begin{pmatrix}a & 0 & 0 & 0\\0 & w & z & 0\\0 & y & x & 0\\ 0 & 0 & 0 & a\end{pmatrix}: a\in \G^{\epsilon}_1(\mathfrak{o}_1), \begin{pmatrix}w & z\\y & x\end{pmatrix}\in\G^{\epsilon}_2(\mathfrak{o}_1)\right\}.
\end{eqnarray*}

Before describing the irreducible representations of $\G^{\epsilon}_n(\mathfrak{o}_l)$ for $(\epsilon, n, l) = (1,4,2), (-1,4,2)$ we must first describe the irreducible representations of the groups $\G^{\epsilon}_{(2,1,1)}$. 

\begin{proposition}
The representation zeta function of the group $\G^{\epsilon}_{(2,1,1)}$ is given by
$$
\zeta_{\G^\epsilon_{(2,1,1)}}(s)=(q-1)(q-\epsilon)\zeta_{\G^\epsilon_2(\mathfrak{o}_1)}(s)q^{-2s} + \zeta_{\K^\epsilon}(s)
$$
where
$$
\K^{\epsilon} := E^{\epsilon}/Z(E^{\epsilon})\rtimes M^{\epsilon}
$$ 
and
\begin{eqnarray*}
\zeta_{\K^1}(s)&=&(q-1)\zeta_{\GL_2(\mathfrak{o}_1)}(s)+2(q-1)^2(q^2-1)^{-s}+(q-1)(q+2)((q^2-1)(q-1))^{-s}+\\&&(q-1)^3(q(q^2-1))^{-s};\\
\zeta_{\K^{-1}}(s)&=&(q+1)\zeta_{\GU_2(\mathfrak{o}_1)}(s)+(q^2-1)(q+1)(q(q^2-1))^{-s}+q(q^2-1)(q-1)(q+1)^{-2s}.
\end{eqnarray*}

\end{proposition}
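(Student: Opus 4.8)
The plan is to adapt the argument sketched above for $\G^\epsilon_{(l,1)}$, now carrying along the non-abelian Levi factor $M^\epsilon\cong\G^\epsilon_1(\mathfrak{o}_1)\times\G^\epsilon_2(\mathfrak{o}_1)$ in place of an abelian group, via Clifford theory with respect to the normal subgroup $E^\epsilon\trianglelefteq\G^\epsilon_{(2,1,1)}$. First I would record the structure of $E^\epsilon$: its centre $Z^\epsilon:=Z(E^\epsilon)$ consists of the elements whose unique non-zero off-diagonal entry lies in position $(1,4)$, so $|Z^\epsilon|=q$ and $[E^\epsilon:Z^\epsilon]=q^4$, and the commutator map induces a non-degenerate alternating pairing $E^\epsilon/Z^\epsilon\times E^\epsilon/Z^\epsilon\to Z^\epsilon$, so that $[E^\epsilon,E^\epsilon]=Z^\epsilon$ and $E^\epsilon$ is a Heisenberg group of order $q^5$. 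Hence $E^\epsilon$ has $q^4$ linear characters (exactly those trivial on $Z^\epsilon$) and, for each of the $q-1$ non-trivial characters $\chi$ of $Z^\epsilon$, a unique irreducible character $\rho_\chi$ lying over $\chi$, of degree $q^2$ (Stone--von Neumann). The point to flag immediately is that $M^\epsilon$ centralises $Z^\epsilon$: conjugation by $\diag(a,g_2,a)\in M^\epsilon$ fixes the $(1,4)$-entry. Therefore each $\chi$, and hence each $\rho_\chi$, is $M^\epsilon$-stable.

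Clifford theory for $E^\epsilon\trianglelefteq\G^\epsilon_{(2,1,1)}$ then splits $\Irr(\G^\epsilon_{(2,1,1)})$ according to the $M^\epsilon$-orbit of the character of $E^\epsilon$ lying below. The characters lying over a linear character of $E^\epsilon$ are exactly those with $Z^\epsilon=[E^\epsilon,E^\epsilon]$ in their kernel, i.e.\ those inflated from $\G^\epsilon_{(2,1,1)}/Z^\epsilon$; since $\G^\epsilon_{(2,1,1)}/Z^\epsilon\cong(E^\epsilon/Z^\epsilon)\rtimes M^\epsilon=\K^\epsilon$, this family contributes exactly $\zeta_{\K^\epsilon}(s)$. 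For the remaining characters, lying over some $\rho_\chi$, I would show that $\rho_\chi$ extends to $\G^\epsilon_{(2,1,1)}$. Granting this, Gallagher's theorem (using $\G^\epsilon_{(2,1,1)}/E^\epsilon\cong M^\epsilon$) gives a bijection $\Irr(\G^\epsilon_{(2,1,1)}\mid\rho_\chi)\leftrightarrow\Irr(M^\epsilon)$ which multiplies degrees by $q^2$; summing over the $q-1$ choices of $\chi$ contributes $(q-1)q^{-2s}\zeta_{M^\epsilon}(s)$. As $\G^\epsilon_1(\mathfrak{o}_1)$ is abelian of order $q-\epsilon$, $\zeta_{M^\epsilon}(s)=\zeta_{\G^\epsilon_1(\mathfrak{o}_1)}(s)\zeta_{\G^\epsilon_2(\mathfrak{o}_1)}(s)=(q-\epsilon)\zeta_{\G^\epsilon_2(\mathfrak{o}_1)}(s)$, and adding the two contributions yields the stated formula for $\zeta_{\G^\epsilon_{(2,1,1)}}(s)$.

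To construct the extension of $\rho_\chi$, for $\epsilon=1$ I would exhibit an $M^1$-stable polarisation: take $A^1\leq E^1$ to be the maximal abelian subgroup on which the ``lower'' coordinates vanish. Then $A^1$ is normal in both $E^1$ and $M^1$, the coordinate projection $A^1\to Z^1$ is a group homomorphism, and --- because $M^1$ fixes $Z^1$ pointwise --- the inflation of $\chi$ along $A^1\to Z^1$ is an $M^1$-invariant linear character of $A^1$, hence extends to a linear character $\mu$ of $A^1\rtimes M^1$ with $\mu|_{Z^1}=\chi$. Since $E^1(A^1\rtimes M^1)=\G^1_{(2,1,1)}$, Mackey's restriction formula shows that $\Ind_{A^1\rtimes M^1}^{\G^1_{(2,1,1)}}\mu$ restricts on $E^1$ to $\Ind_{A^1}^{E^1}(\mu|_{A^1})=\rho_\chi$, so it is an irreducible extension of $\rho_\chi$ of degree $q^2$. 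For $\epsilon=-1$ there is no $M^{-1}$-stable polarisation, since $\GU_2(\mathbb{F}_q)$ acts $\mathbb{F}_q$-irreducibly on $E^{-1}/Z^{-1}$; instead I would invoke the corresponding property of the unitary Weil representation --- $\rho_\chi$ extends to $E^{-1}\rtimes\GU_2(\mathbb{F}_q)$ because the metaplectic cover splits over unitary groups --- and then transport the extension across the central factor $\G^{-1}_1(\mathfrak{o}_1)$, whose Schur multiplier vanishes. I expect this step, namely extending $\rho_\chi$ to $\G^\epsilon_{(2,1,1)}$ uniformly in $\epsilon$ (equivalently, vanishing of the relevant class in $H^2(M^\epsilon,\mathbb{C}^\times)$), to be the main obstacle; the rest is bookkeeping.

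It remains to compute $\zeta_{\K^\epsilon}(s)$, where $\K^\epsilon=V^\epsilon\rtimes M^\epsilon$ with $V^\epsilon:=E^\epsilon/Z^\epsilon$ abelian. Here Mackey's method for semidirect products (exactly as in the treatment of $\G^{-1}_{(l,1)}$ above) gives $\zeta_{\K^\epsilon}(s)=\sum_{[\psi]}[M^\epsilon:\mathrm{Stab}_{M^\epsilon}(\psi)]^{-s}\,\zeta_{\mathrm{Stab}_{M^\epsilon}(\psi)}(s)$, summed over representatives $\psi$ of the $M^\epsilon$-orbits on $\widehat{V^\epsilon}$ (which, via the pairing, correspond to the $M^\epsilon$-orbits on $V^\epsilon$). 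As an $M^\epsilon$-module, $V^1$ is a sum of the natural $\GL_2(\mathbb{F}_q)$-module and its dual, each twisted by a power of the central scalar $a$, whereas $V^{-1}\cong\mathbb{F}_{q^2}^2$ carries the Hermitian form with $\GU_2(\mathbb{F}_q)$ acting naturally and $a\in\GU_1(\mathbb{F}_q)$ rescaling. I would then enumerate the orbits --- the zero character (stabiliser $M^\epsilon$, contributing $\zeta_{M^\epsilon}(s)$, i.e.\ the $\zeta_{\GL_2(\mathfrak{o}_1)}$ resp.\ $\zeta_{\GU_2(\mathfrak{o}_1)}$ term); the ``rank-one'' orbits, supported on a single summand resp.\ on an isotropic vector; and the ``generic'' orbits, pairing non-trivially with both summands resp.\ supported on an anisotropic vector --- compute the stabilisers (abelian, or abelian-by-cyclic, away from the zero orbit) together with their indices, and collect the terms. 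The bookkeeping for the degenerate (isotropic) orbits, in particular pinning down the isomorphism type of their stabilisers and hence their zeta functions, is the most delicate part of this last step.
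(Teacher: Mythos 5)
Your proposal follows essentially the same route as the paper: Clifford theory relative to the Heisenberg-type normal subgroup $E^\epsilon$, with the $q-1$ characters above a non-trivial central character (all $M^\epsilon$-stable and extendible) contributing $(q-1)(q-\epsilon)q^{-2s}\zeta_{\G^\epsilon_2(\mathfrak{o}_1)}(s)$, and the remaining characters factoring through $\K^\epsilon=E^\epsilon/Z(E^\epsilon)\rtimes M^\epsilon$, which is then treated by Mackey's little-group method for the abelian normal subgroup $E^\epsilon/Z(E^\epsilon)$. The differences are only in emphasis: you spell out the extension step (invariant polarisation for $\epsilon=1$, unitary Weil representation plus the cyclic central factor for $\epsilon=-1$), which the paper leaves implicit (citing Singla for $\epsilon=1$), whereas you leave the orbit and stabiliser bookkeeping for $\K^\epsilon$ as an outline, which is exactly the data the paper's proof records.
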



\begin{proof}
The case $\epsilon = 1$ can be found in \cite{Pooja}. We proceed with the proof for $\epsilon = -1$ which is similar to the proof for $\epsilon = 1$. 
For simplicity write $E:=E^{-1}$ and $M:=M^{-1}$ and let $H = E\rtimes M$. 
The group $E\cong H_4(\mathbb{F}_q)$, the Heisenberg group of degree $4$, has $q-1$ irreducible representations of dimension $q^2$ which lie above the non-trivial linear representations of the centre $Z=Z(E)\cong\mathbb{F}_q$. The group $M$ acts trivially on $Z$ and hence stabilises all the $q^2$-dimensional irreducible characters of $E$. Inducing these representations to $H$ contributes $(q-1)\zeta_M(s)q^{-2s}=(q-1)(q+1)q^{-2s}\zeta_{\G^{\epsilon}_2(\mathfrak{o}_1)}$ to the zeta function of $H$. 

We now deal with the remaining representations. These correspond to representations of $E$ whose central representation is trivial and factor through $Q = E/Z$. Consider $Q\rtimes M$ and identify $Q$ and its dual $Q^{\vee}$ with the additive group $\mathbb{F}_{q^2}\times\mathbb{F}_{q^2}$. The action of $m\in M$ on $Q^{\vee}$ is given by $\mathbb{F}_{q^2}\times\mathbb{F}_{q^2}\ni(u,v)\mapsto(a(u\bar{x}+v\bar{y}), a(-u\bar{D}y+vx\bar{D}))$ where we write an element $m\in M$ as
$$
m = \begin{pmatrix} a & 0 & 0 & 0\\0 & x & y & 0\\0 & -\bar{y}D & \bar{x}D & 0\\ 0 & 0 & 0 & a\end{pmatrix}
$$
for some $a,x,y,D\in\mathbb{F}_{q^2}$ satisfying $a\bar{a} = 1$, $x\bar{x}+y\bar{y} = 1$ and $D\bar{D} = 1$. We now use Mackey's method for semi-direct products (see \cite{JPS}, Section 8.2). The orbits of $M$ on $Q^{\vee}$ are:

\begin{table}[ht!]
\small
\centering
  \begin{center}
    \label{tab:table1}
    \begin{tabular}{| c | c | c |}
      \hline
      Orbit & Parameter & Stabiliser in $D$ \\
      \hline
& &\\
		$[0,0]$ & 
		$-$ &
		$\G^{\epsilon}_2(\mathfrak{o}_1)\times\G^{\epsilon}_1(\mathfrak{o}_1)$\\
& &\\
		$[s,0]$ & 
		$s\in\mathbb{F}_{q^2}/\GU_1(\mathbb{F}_q)$ &
		$\G^{\epsilon}_1(\mathfrak{o}_1)\times \G^{\epsilon}_1(\mathfrak{o}_1)$\\
& &\\
		$[s,1]$ & 
		$s\in\mathbb{F}_{q^2}/\GU_1(\mathbb{F}_q)$ &
		$T$\\
& &\\  
      \hline
    \end{tabular}
  \end{center}
\end{table}

where 
$$
T=\left\{\begin{pmatrix}x & y\\ y & x\end{pmatrix}:x,y\in\mathbb{F}_{q^2}, x\bar{x}+y\bar{y}=1\right\}.
$$
This completes the proof.
\end{proof}

The irreducible representations of $\G^{\epsilon}_n(\mathfrak{o}_l)$ for $(\epsilon, n, l) = (1,4,2)$ can be found in \cite{Pooja}. We have now found every irreducible representation for ($\epsilon, n, l) = (-1,4,2)$ and we summarise this in Table \ref{table42}. The number of complex irreducible representations of each degree of the groups $\G^{\epsilon}_4(\mathfrak{o}_2)$ can be obtained from the information in Table \ref{table42} using equation (\ref{eqnzeta}).

\begin{theorem}\label{main}
The zeta function of the group $\G^{\epsilon}_4(\mathfrak{o}_2)$ is given by
$$
\zeta_{\G^{\epsilon}_4(\mathfrak{o}_2)}(s) = \sum_{A\in\mathbb{T}^{\epsilon}_4}n_A\zeta_{Z_{\G^{\epsilon}_4(\mathfrak{o}_1)}(A)}(s)|\G^{\epsilon}_4(\mathfrak{o}_1):Z_{\G^{\epsilon}_4(\mathfrak{o}_1)}(A)|^{-s},
$$
where $\mathbb{T}^{\epsilon}_4$ denotes the set of types of similarity classes in $\g^{\epsilon}_4(\mathfrak{o}_1)$.
\end{theorem}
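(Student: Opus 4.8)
The plan is to establish the displayed formula for $\zeta_{\G^{\epsilon}_4(\mathfrak{o}_2)}(s)$ by appealing directly to the general machinery set up in Section~2, so that the real content of Theorem~\ref{main} lies not in the formula itself but in the explicit determination of the summands. First I would invoke the bijection~(\ref{eqnsim}): since every irreducible representation of $\G^{\epsilon}_4(\mathfrak{o}_2)$ is, via Clifford theory and the canonical extension $\chi_\phi$ from \cite{Pooja2}, induced from a character of an inertia subgroup $\T^{\epsilon}_4(\phi)$ with $\T^{\epsilon}_4(\phi)/\K^{\epsilon}_4 \cong Z_{\G^{\epsilon}_4(\mathfrak{o}_1)}(A)$, the irreducible characters of $\G^{\epsilon}_4(\mathfrak{o}_2)$ are partitioned according to the $\G^{\epsilon}_4(\mathfrak{o}_2)$-orbit of the corresponding matrix $A \in \g^{\epsilon}_4(\mathfrak{o}_1)$. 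Each $\delta \in \Irr(Z_{\G^{\epsilon}_4(\mathfrak{o}_1)}(A))$ yields an irreducible character of degree $|\G^{\epsilon}_4(\mathfrak{o}_1):Z_{\G^{\epsilon}_4(\mathfrak{o}_1)}(A)| \cdot \delta(1)$, since the induction is from a subgroup of that index and $\chi_\phi$ is linear on $\K^{\epsilon}_4$; summing $\chi(1)^{-s}$ over such $\chi$ gives exactly $\zeta_{Z_{\G^{\epsilon}_4(\mathfrak{o}_1)}(A)}(s)\,|\G^{\epsilon}_4(\mathfrak{o}_1):Z_{\G^{\epsilon}_4(\mathfrak{o}_1)}(A)|^{-s}$ for each orbit.

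Next I would group the orbits $A \in \mathfrak{S}^{\epsilon}_4$ into types: by the third Lemma of Section~2, matrices of the same type have isomorphic centralisers, hence contribute identical zeta summands, and the number of similarity classes of a given type $A \in \mathbb{T}^{\epsilon}_4$ is the quantity $n_A$. Summing over types rather than over individual orbits collapses~(\ref{eqnsim}) into~(\ref{eqnzeta}) specialised to $n=4$, which is precisely the asserted formula. That $\mathbb{T}^{\epsilon}_4$ is a complete and irredundant list follows from the classification recalled from \cite{Green}: a type is a partition-valued function $\rho_\nu$ on non-zero partitions $\nu$ with $\sum_\nu |\rho_\nu||\nu| = 4$, together with the constraints from Lemma~3.2 and Lemma~3.5 of \cite{AKOV3} in the $\epsilon = -1$ case (the coefficient conditions $c_i^\circ = (-1)^{n-i}c_i$ cutting out which polynomials actually arise in $\mathfrak{gu}_4$). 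For $\epsilon = 1$ this list and the associated data are Singla's \cite{Pooja}; for $\epsilon = -1$ one enumerates the solutions of $\sum_\nu |\rho_\nu||\nu| = 4$, discards or modifies those incompatible with the anti-hermitian constraints, and for each surviving type computes $n_A$, the isomorphism type of $Z_{\G^{-1}_4(\mathfrak{o}_1)}(A)$, and its index --- the data assembled in Table~\ref{table42}.

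The genuinely laborious part --- and the step I expect to be the main obstacle --- is the explicit population of Table~\ref{table42} in the unitary case: for each of the roughly two dozen types one must identify the centraliser up to isomorphism (typically a product of groups of the shape $\G^{\pm1}_{n_i}(\mathfrak{o}_{l_i})$, or $\mathbb{F}_{q^k}^\ast$-type tori, or one of the auxiliary groups $\G^{\epsilon}_{(l,1)}$ and $\G^{\epsilon}_{(2,1,1)}$ treated in the two Propositions above), compute its representation zeta function (already known in the smaller cases from Tables~\ref{table21}, \ref{table31}, \ref{table41} and the two Propositions), and count $n_A$ by a careful orbit count of the relevant polynomials and their coefficients over $\mathfrak{o}_1$ respecting the Galois constraints. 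The bookkeeping for the mixed types --- e.g.\ a pair of conjugate linear factors $\alpha_1 = -\alpha_2^\circ$ paired with an anti-hermitian block, or an irreducible quadratic whose coefficients satisfy $c_i^\circ = (-1)^{2-i}c_i$ --- is where Ennola duality with the $\GL_4$ case must be checked rather than assumed. Once Table~\ref{table42} is in place, substituting its columns into~(\ref{eqnzeta}) with $n=4$ is mechanical, and the corollaries on character degrees, the value at $s=-1$, and the comparison with $\sym^{\epsilon}_4(\mathfrak{o}_2)$ follow by expanding the resulting finite sum.
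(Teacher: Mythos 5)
Your proposal follows essentially the same route as the paper: Theorem \ref{main} is equation (\ref{eqnzeta}) specialised to $n=4$, derived from the Clifford-theoretic bijection (\ref{eqnsim}) of \cite{Pooja2} together with the grouping of similarity classes into types via the lemma on isomorphic centralisers, with the substantive work residing in the determination of Table \ref{table42} and the zeta functions of the auxiliary centraliser groups — exactly as you structure it. No gaps or discrepancies to report.
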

\begin{landscape}
\begin{table}[t]
\small
\centering
\caption{Representatives of similarity classes in $\g^{\epsilon}_4(\mathfrak{o}_1)$ under $\G^{\epsilon}_4(\mathfrak{o}_2)$}
  \begin{center}
    \label{table42}
    \begin{tabular}{| c | c | c | c | c |}
      \hline
Type $A\in\mathbb{T}^{\epsilon}_4$& Parameter & Number of similarity & Isomorphism type & Index of $Z$ in $\G^{\epsilon}_4(\mathfrak{o}_1)$ \\
   &  & classes, $n_A$ &$Z$ of $Z_{\G^{\epsilon}_4(\mathfrak{o}_1)}(A)$ & \\
      \hline
&&& &\\
      $\parbox{5cm}{$\{(t-\alpha)^{(1,1,1,1)}\}$}$  &
		$\parbox{4.4cm}{$\epsilon=1:\alpha\in\mathbb{F}_q$\\$\epsilon=-1: \alpha+\alpha^\circ=0$}$&
		$q$ & 
		$\G^{\epsilon}_4(\mathfrak{o}_1) $ & 
		$1$\\
&&& &\\
      $\parbox{5cm}{$\{(t-\alpha)^{(2,1,1)}\}$}$  &
		$\parbox{4.4cm}{$\epsilon=1:\alpha\in\mathbb{F}_q$\\$\epsilon=-1: \alpha+\alpha^\circ=0$}$&
		$q$ & 
		$\G^{\epsilon}_{(2,1,1)}$ & 
		$(q^2+1)(q^3-\epsilon)(q+\epsilon)$\\
&&& &\\
      $\parbox{5cm}{$\{(t-\alpha)^{(2,2)}\}$}$  &
		$\parbox{4.4cm}{$\epsilon=1:\alpha\in\mathbb{F}_q$\\$\epsilon=-1: \alpha+\alpha^\circ=0$}$&
		$q$ & 
		$\G^{\epsilon}_2(\mathfrak{o}_2)$ & 
		$q(q^4-1)(q^3-\epsilon)$\\
&&& &\\     
      $\parbox{5cm}{$\{(t-\alpha)^{(3,1)}\}$}$  &
		$\parbox{4.4cm}{$\epsilon=1:\alpha\in\mathbb{F}_q$\\$\epsilon=-1: \alpha+\alpha^\circ=0$}$&
		$q$ & 
		$\G^{\epsilon}_{(3,1)}$ & 
		$q^2(q^4-1)(q^3-\epsilon)(q+\epsilon)$\\
&& & &\\  
      $\parbox{5cm}{$\{(t-\alpha)^{(4)}\}$}$  &
		$\parbox{4.4cm}{$\epsilon=1:\alpha\in\mathbb{F}_q$\\$\epsilon=-1: \alpha+\alpha^\circ=0$}$&
		$q$ & 
		$\G^{\epsilon}_1(\mathfrak{o}_4)$ & 
		$q^3(q^4-1)(q^3-\epsilon)(q^2-1)$\\
&&& &\\    
      $\parbox{5cm}{$\{(t-\alpha_1)^{(1,1,1)},(t-\alpha_2)^{(1)}\}$}$  & 
		$\parbox{4.4cm}{$\epsilon=1:\alpha_1\neq\alpha_2\in\mathbb{F}_q$\\$\epsilon=-1: \alpha_i+\alpha_i^\circ=0,/,\alpha_1\neq\alpha_2$}$&
		$q(q-1)$ & 
		$\G^{\epsilon}_3(\mathfrak{o}_1)\times\G^{\epsilon}_1(\mathfrak{o}_1)$ & 
		$q^3(q+\epsilon)(q^2+1)$\\
& &&&\\ 
      $\parbox{5cm}{$\{(t-\alpha_1)^{(2,1)},(t-\alpha_2)^{(1)}\}$}$  & 
		$\parbox{4.4cm}{$\epsilon=1:\alpha_1\neq\alpha_2\in\mathbb{F}_q$\\$\epsilon=-1: \alpha_i+\alpha_i^\circ=0,\,\alpha_1\neq\alpha_2$}$&
		$q(q-1)$ & 
		$\G^{\epsilon}_{(2,1)}\times\G^{\epsilon}_1(\mathfrak{o}_1)$ & 
		$q^3(q^2+1)(q+\epsilon)^2(q^3-\epsilon)$\\
&&& &\\
      $\parbox{5cm}{$\{(t-\alpha_1)^{(3)},(t-\alpha_2)^{(1)}\}$}$  & 
		$\parbox{4.4cm}{$\epsilon=1:\alpha_1\neq\alpha_2\in\mathbb{F}_q$\\$\epsilon=-1: \alpha_i+\alpha_i^\circ=0,\,\alpha_1\neq\alpha_2$}$&
		$q(q-1)$ &
		$\G^{\epsilon}_1(\mathfrak{o}_3)\times\G^{\epsilon}_1(\mathfrak{o}_1)$ & 
		$q^4(q^4-1)(q^3-\epsilon)(q+\epsilon)$\\
&&& &\\    
      $\parbox{5cm}{$\{(t-\alpha_1)^{(1,1)},(t-\alpha_2)^{(1,1)}\}$}$  & 
		$\parbox{4.4cm}{$\epsilon=1:\alpha_1\neq\alpha_2\in\mathbb{F}_q$\\$\epsilon=-1: \alpha_i+\alpha_i^\circ=0,\, \alpha_1\neq\alpha_2$}$&
		$\frac{1}{2}q(q-1)$ & 
		$\G^{\epsilon}_2(\mathfrak{o}_1)\times\G^{\epsilon}_2(\mathfrak{o}_1)$ & 
		$q^4(q^2+1)(q^2+\epsilon q+1)$\\
&&& &\\   
    $\parbox{5cm}{$\{(t-\alpha_1)^{(2)},(t-\alpha_2)^{(1,1)}\}$}$  & 
		$\parbox{4.4cm}{$\epsilon=1:\alpha_1\neq\alpha_2\in\mathbb{F}_q$\\$\epsilon=-1: \alpha_i+\alpha_i^\circ=0,\, \alpha_1\neq\alpha_2$}$&
		$q(q-1)$ & 
		$\G^{\epsilon}_1(\mathfrak{0}_2)\times\G^{\epsilon}_2(\mathfrak{o}_1)$ & 
		$q^4(q^2+\epsilon q+1)(q^4-1)$\\
&&& &\\   
      $\parbox{5cm}{$\{(t-\alpha_1)^{(2)},(t-\alpha_2)^{(2)}\}$}$  & 
		$\parbox{4.4cm}{$\epsilon=1:\alpha_1\neq\alpha_2\in\mathbb{F}_q$\\$\epsilon=-1: \alpha_i+\alpha_i^\circ=0,\, \alpha_1\neq\alpha_2$}$&
		$\frac{1}{2}q(q-1)$ & 
		$\G^{\epsilon}_1(\mathfrak{o}_2)\times\G^{\epsilon}_1(\mathfrak{o}_2)$ & 
		$q^4(q+\epsilon)(q^4-1)(q^3-\epsilon)$\\
&& &&\\      
      $\parbox{5cm}{$\{(t-\alpha_1)^{(1,1)},(t-\alpha_2)^{(1)},(t-\alpha_3)^{(1)}\}$}$  & 
		$\parbox{4.4cm}{$\epsilon=1:\alpha_i\in\mathbb{F}_q$ distinct\\$\epsilon=-1: \alpha_i+\alpha_i^\circ=0$, distinct}$&
		$\frac{1}{2}q(q-1)(q-2)$ &
		$\G^{\epsilon}_2(\mathfrak{o}_1)\times\G^{\epsilon}_1(\mathfrak{o}_1)^2$&
		$q^5(q+\epsilon)(q^2+1)(q^2+\epsilon q+1)$\\
&&& &\\        
\hline
    \end{tabular}
  \end{center}
\end{table}
\end{landscape}

\begin{landscape}
\begin{table}[t]
\small
\centering
  \begin{center}
    \label{tab:table1}
    \begin{tabular}{| c | c | c | c | c|}
      \hline
Type $A\in\mathbb{T}^{\epsilon}_4$& Parameter & Number of similarity & Isomorphism type & Index of $Z$ \\
   &  & classes, $n_A$ &$Z$ of $Z_{\G^{\epsilon}_4(\mathfrak{o}_1)}(A)$ & in $\G^{\epsilon}_4(\mathfrak{o}_1)$\\
      \hline
      $\parbox{6.5cm}{$\{(t-\alpha_1)^{(2)},(t-\alpha_2)^{(1)},(t-\alpha_3)^{(1)}\}$}$  & 
		$\parbox{6.75cm}{$\epsilon=1:\alpha_i\in\mathbb{F}_q$ distinct\\$\epsilon=-1: \alpha_i+\alpha_i^\circ=0$ distinct}$&
		$\frac{1}{2}q(q-1)(q-2)$ & 
		$\G^{\epsilon}_1(\mathfrak{o}_1)^3$&
		$q^5(q^2+1)(q+\epsilon)^2(q^3-\epsilon)$\\
&&& &\\    
      $\parbox{6.5cm}{$\{(t-\alpha_i)^{(1)}\}_{i=1,2,3,4}$}$  &
		$\parbox{6.75cm}{$\epsilon=1:\alpha_i\in\mathbb{F}_q$ distinct\\$\epsilon=-1: \alpha_i+\alpha_i^\circ=0$ distinct}$&
		$\frac{1}{24}q(q-1)(q-2)(q-3)$ &
		$\G^{\epsilon}_1(\mathfrak{o}_1)^4$&
		$\parbox{3cm}{$q^6(q^3+\epsilon q^2+q+\epsilon)\times$\\$(q+\epsilon)(q^2+\epsilon q+ 1)$}$\\
&&& &\\     
      $\parbox{6.5cm}{$\epsilon = 1: \{(t-\alpha)^{(1,1)},f^{(1)}\}$\\$\epsilon=-1:\{(t-\alpha_1)^{(1,1)},(t-\alpha_2)^{(1)},(t-\alpha_3)^{(1)}\}$}$  & 
		$\parbox{6.75cm}{$\epsilon=1:\alpha\in\mathbb{F}_q,\,f$ irreducible quadratic\\$\epsilon=-1:\alpha_1+\alpha_1^{\circ} = 0,\,\alpha_2=-\alpha_3^\circ$ distinct}$&
		$\frac{1}{2}q^2(q-1)$ & 
		$\G^{\epsilon}_2(\mathfrak{o}_1)\times\mathbb{F}_{q^2}^*$ & 
		$q^5(q^3-\epsilon)(q^4-1)$\\
&& &&\\     
       $\parbox{6.5cm}{$\epsilon = 1: \{(t-\alpha)^{(2)},f^{(1)}\}$\\$\epsilon=-1:\{(t-\alpha_1)^{(2)},(t-\alpha_2)^{(1)},(t-\alpha_3)^{(1)}\}$}$  & 
		$\parbox{6.75cm}{$\epsilon=1:\alpha\in\mathbb{F}_q,\,f$ irreducible quadratic\\$\epsilon=-1:\alpha_1+\alpha_1^{\circ} = 0,\,\alpha_2=-\alpha_3^\circ$ distinct}$&
		$\frac{1}{2}q^2(q-1)$ & 
		$\G^{\epsilon}_1(\mathfrak{o}_2)\times\mathbb{F}_{q^2}^*$ & 
		$q^5(q^3-\epsilon)(q^4-1)$\\
&&& &\\     
       $\parbox{6.5cm}{$\epsilon = 1: \{(t-\alpha_1)^{(1)},(t-\alpha_2)^{(1)},f^{(1)}\}$\\$\epsilon=-1:\{(t-\alpha_i)^{(1)}\}_{i=1,2,3,4}$}$  &
		$\parbox{6.75cm}{$\epsilon=1:\alpha_i\in\mathbb{F}_q$ distinct, $f$ irreducible quadratic\\$\epsilon=-1:\alpha_i+\alpha_i^{\circ} = 0,\,i=1,2$ distinct\\$\alpha_3=-\alpha_4^\circ$ distinct}$&
		$\frac{1}{4}q^2(q-1)^2$ &
		$\G^{\epsilon}_1(\mathfrak{o}_1)^2
										\times\mathbb{F}_{q^2}^*$ &
		$q^6(q+\epsilon)(q^2+1)(q^3-\epsilon)$\\
&&& &\\     
       $\parbox{6.5cm}{$\epsilon = 1: \{f^{(1,1)}\}$\\$\epsilon=-1:\{(t-\alpha_1)^{(1,1)},(t-\alpha_2)^{(1,1)}\}$}$  & 
		$\parbox{6.75cm}{$\epsilon=1:f$ irreducible quadratic\\$\epsilon=-1:\alpha_1=-\alpha_2^\circ$ distinct}$&
		$\frac{1}{2}q(q-1)$ & 
		$\GL_2(\mathbb{F}_{q^2})$ & 
		$q^4(q-\epsilon)(q^3-\epsilon)$\\
&&& &\\      
      $\parbox{6.5cm}{$\epsilon = 1: \{f^{(2)}\}$\\$\epsilon=-1:\{(t-\alpha_1)^{(2)},(t-\alpha_2)^{(2)}\}$}$  & 
		$\parbox{6.75cm}{$\epsilon=1:f$ irreducible quadratic\\$\epsilon=-1:\alpha_1=-\alpha_2^\circ$ distinct}$&
		$\frac{1}{2}q(q-1)$ & 
		$\mathbb{F}_{q^2}\times\mathbb{F}_{q^2}^*$ & 
		$q^4(q^4-1)(q^3-\epsilon)(q-\epsilon)$\\
& &&&\\    
      $\parbox{6.5cm}{$\epsilon = 1: \{f^{(1)},g^{(1)}\}$\\$\epsilon=-1:\{(t-\alpha_i)^{(1)}\}_{i=1,2,3,4}$}$  &
		$\parbox{6.75cm}{$\epsilon=1:f\neq g$ irreducible quadratics\\$\epsilon=-1:\alpha_1=-\alpha_2^\circ$ distinct, $\alpha_3=-\alpha_4^\circ$ distinct\\$\{\alpha_1,\alpha_2\}\neq\{\alpha_3,\alpha_4\}$}$&
		$\frac{1}{8}q(q-1)(q^2-q-2)$ & 
		$\mathbb{F}_{q^2}^*\times\mathbb{F}_{q^2}^*$ & 
		$q^6(q^2+1)(q^3-\epsilon)(q-\epsilon)$\\
&& &&\\ 
      $\parbox{6.5cm}{$\{(t-\alpha)^{(1)},f^{(1)}\}$ $f$ irreducible cubic}$  & 
		$\parbox{6.75cm}{$\epsilon=1:\alpha\in\mathbb{F}_q$\\$\epsilon=-1:\alpha+\alpha^\circ=0$ *}$&
		$\frac{1}{3}q^2(q^2-1)$ &
		$\G^{\epsilon}_1(\mathfrak{o}_1)\times\G^{\epsilon}_1(\mathbb{F}_{q^3})$ & 
		$q^6(q^4-1)(q^2-1)$\\
 &&& &\\    
      $\parbox{6.5cm}{$\{f^{(1)}\}$ $f$ irreducible quartic}$  & 
		$\parbox{6.75cm}{**}$&
		$\frac{1}{4}q^2(q^2-1)$ & 
		$\mathbb{F}_{q^4}^*$ & 
		$q^6(q-\epsilon)(q^2-1)(q^3-\epsilon)$\\
 &&& &\\  
      \hline
    \end{tabular}
 \end{center}
\begin{tablenotes}
      \small
      \item * If $\epsilon=-1$ we require that $f=t^3+\sum_{i=0}^{2}c_it^i\in\mathbb{F}_{q^2}[t]$ where $c_i^{\circ}=(-1)^{i+1}c_i$ for $0\leq i<3$.\\
** If $\epsilon=-1$ we require that $f=t^4+\sum_{i=0}^{3}c_it^i\in\mathbb{F}_{q^2}[t]$ where $c_i^{\circ}=(-1)^{i+1}c_i$ for $0\leq i<4$.\\
    \end{tablenotes}
\end{table}
\end{landscape}

\clearpage

\section{Acknowledgements}
I would like to thank Christopher Voll for introducing me to this problem and for his support, guidance and insight throughout this project.

\bibliographystyle{plain}
\bibliography{refs}

\end{document}